\newtheorem{theorem}{Theorem}[section]
\newtheorem{lemma}{Lemma}[section]
\newtheorem{proposition}{Proposition}[section]
\newtheorem{corollary}{Corollary}[section]
\numberwithin{equation}{section}
\newcommand{\cE}{\mathcal{E}}
\newcommand{\ep}{\varepsilon}
\newcommand{\C}{\mathbb{C}}
\newcommand{\Q}{\mathbb{Q}}
\newcommand{\R}{\mathbb{R}}
\newcommand{\F}{\mathbb{F}}
\newcommand{\FF}{\mathbb{F}} 
\newcommand{\Out}{\mathrm{Out}}
\newcommand{\ad}{\mathrm{ad}}
\newcommand{\msc}{\mathrm{sc}}
\newcommand{\bG}{\mathbf{G}}
\newcommand{\bT}{\mathbf{T}}
\newcommand{\bH}{\mathbf{H}}
\newcommand{\Irr}{\mathrm{Irr}}
\def\adots{\mathinner{\mkern2mu\raise0pt\hbox{.}  % antidiagonal dots
\mkern2mu\raise4pt\hbox{.}\mkern1mu
\raise7pt\vbox{\kern7pt\hbox{.}}\mkern1mu}}
\begin{document}

\bibliographystyle{amsplain}

\title{A computational approach to the Frobenius--Schur indicators of finite exceptional groups}
\author{Stephen Trefethen and C. Ryan Vinroot}
\date{}

\maketitle

\begin{abstract}
We prove that the finite exceptional groups $F_4(q)$, $E_7(q)_{\ad}$, and $E_8(q)$ have no irreducible complex characters with Frobenius--Schur indicator $-1$, and we list exactly which irreducible characters of these groups are not real-valued.  We also give a complete list of complex irreducible characters of the Ree groups ${^2 F_4}(q^2)$ which are not real-valued, and we show the only character of this group which has Frobenius--Schur indicator $-1$ is the cuspidal unipotent character $\chi_{21}$ found by M. Geck.
\\
\\
\noindent 2010 {\it Mathematics Subject Classification: } 20C33, 20C40
%\\
%\\
%{\it Key words and phrases: } 
\end{abstract}

\section{Introduction}

Given a finite group $G$ and its table of complex irreducible characters, it is a natural question to ask the value of the Frobenius--Schur indicator of each irreducible character of $G$.  That is, we may ask whether each character is real-valued, and if it is, whether it is afforded by a representation which is defined over the real numbers.  The Frobenius--Schur indicator is given, for example, in each character table in the {\em Atlas of finite groups} \cite{Atlas}.  If we have the full character table of $G$, and we know the square of each conjugacy class of $G$, then the Frobenius--Schur indicator may be computed directly from its character formula.  Without directly applying the formula, we could also identify which characters of $G$ are not real-valued from its character table, and if we have proven that certain irreducible characters of $G$ have indicator equal to $-1$, we can check if these are the only such characters by counting the involutions in $G$ and using the Frobenius--Schur involution count.  We do exactly this for many finite exceptional groups.  In some cases where the generic character table is not known (for example, $E_8(q)$), we use results from the character theory of finite reductive groups to identify precisely those irreducible characters which are not real-valued, and so we obtain complete results for the Frobenius--Schur indicators for the characters of these groups.  

After preliminary notions and results in Section \ref{Prelim}, we give examples of this method for some small-rank exceptional groups in Section \ref{ExamplesSection}, namely for the groups ${^3 D_4(q)}$, ${^2 B_2(q^2)}$, ${^2 G_2(q^2)}$, and $G_2(q)$.  We confirm some known results on the Frobenius--Schur indicators in these groups, which are all $1$ or $0$ in these cases.  We also discuss two related problems for these groups in Section \ref{ExamplesSection}.  First is the computation of the Schur index of the irreducible characters of a group; the index is conjectured to be at most $2$ for any finite quasisimple group (see Section \ref{SchurSec}), and is known to be $1$ for every irreducible character of the exceptional groups just listed.  The second related problem is the determination of the strongly real classes of a group, which are the real classes which can be inverted by an involution.  It is known that every real class is strongly real for each of the above finite exceptional groups, except for $G_2(q)$ when $q$ is a power of $2$ or $3$.  In Proposition \ref{G223}, we confirm this statement for those remaining cases by using the known generic character table of $G_2(q)$.

In Section \ref{F4Section}, we study the exceptional groups $F_4(q)$ and ${^2 F_4(q^2)}$.  While the full generic character table of $F_4(q)$ is not yet fully known, the fields of character values and the Schur indices of unipotent characters of $F_4(q)$ (and all other exceptional groups) have been determined by Geck \cite{Ge03, Ge04, Ge05}.  It turns out that the only characters of $F_4(q)$ which are not real-valued are unipotent, and we show that the other characters all have Frobenius--Schur indicator 1 in Theorem \ref{F4}.  As a corollary, we check that the Schur index of every character of $F_4(q)$ is at most $2$ in Corollary \ref{F4Schur}.  The generic character table of ${^2 F_4(q^2)}$ can be accessed in the package {\sf CHEVIE} \cite{CHEVIE}, and much of its character table is available in the literature \cite{HiHu12, Ma90}. With this information, and the work of Geck on unipotent characters, we compute the Frobenius--Schur indicators for the characters of ${^2 F_4(q^2)}$ in Theorem \ref{2F4}.  In particular we show that the only character of this group which has indicator equal to $-1$ is the cuspidal unipotent character found by Geck \cite{Ge03}.

We develop some results in Section \ref{AutSection} on the character theory of finite reductive groups which are applied to extend our method to some exceptional groups of larger rank in Section \ref{E7E8Section}.  Lemma \ref{GalAct} gives some conditions on arbitrary connected reductive groups over finite fields to ensure all unipotent characters are invariant under any rational outer automorphism.  We combine this with a previous result on the Jordan decomposition of real-valued characters of certain finite reductive groups \cite{SV15} to give manageable conditions for an irreducible character to be real-valued in Lemma \ref{RealLemma}.  These results, along with the work of Geck on unipotent characters and computations of L\"{u}beck \cite{LuWWW1, LuWWW} allow us to compute the Frobenius--Schur indicators of all irreducible characters of $E_7(q)_{\ad}$ and $E_8(q)$ in Theorems \ref{E7} and \ref{E8}.  In particular, all real-valued characters of these groups have indicator $1$, and a list of all characters of these groups which are not real-valued is given, along with their character degrees, in Tables \ref{E7Table} and \ref{E8Table}.

In Section \ref{RemarksSec}, we discuss the remaining cases and what results and information are needed in order to complete the computations of Frobenius--Schur indicators for these finite exceptional groups.  Finally, all tables of the results from the computations made are given in the Appendix.\\
\\
\noindent{\bf Acknowledgements. }  The authors thank Frank Himstedt for helpful communication regarding accessing the generic character table of ${^2 F_4(q^2)}$ in {\sf CHEVIE}, Mandi Schaeffer Fry for pointing out the exceptional cases needed in Lemma \ref{GalAct}, and Eamonn O'Brien for various corrections.  The authors thank the anonymous referee for helpful comments and corrections, including some corrections to Table \ref{G2CMC} in the case $q=2$.  Vinroot was supported in part by a grant from the Simons Foundation, Award \#280496.

\section{Preliminaries} \label{Prelim}

\subsection{Frobenius--Schur indicators and Schur indices} \label{SchurSec}

If $G$ is a finite group, we denote by $\Irr(G)$ the set of irreducible complex characters of $G$.  Given $\chi \in \Irr(G)$, we recall the {\em Frobenius--Schur indicator} of $\chi$ (see \cite[Chapter 4]{IsBook}), which we denote by $\ep(\chi)$, may be defined as
$$\ep(\chi) = \frac{1}{|G|} \sum_{g \in G} \chi(g^2).$$
Then $\ep(\chi) = 1, -1,$ or $0$, where $\ep(\chi) = 0$ precisely when $\chi$ is not real-valued.  When $\chi$ is real-valued, $\ep(\chi) = 1$ precisely when $\chi$ is afforded by a complex representation which may be defined over the real numbers, and otherwise $\ep(\chi) = -1$.  The {\em Frobenius--Schur involution count} is the equation \cite[Corollary 4.6]{IsBook}
\begin{equation} \label{FScount}
 \sum_{\chi \in \Irr(G)} \ep(\chi) \chi(1) = \# \{ g \in G \, \mid \, g^2 = 1 \}.
\end{equation}
Every $g \in G$ such that $g^2 = 1$ is an {\em involution} of $G$, where we include the identity as an involution for convenience.  Note that from \eqref{FScount}, since $\ep(\chi) \leq 1$ for each $\chi \in \Irr(G)$, the sum of the degrees of the real-valued characters $\chi \in \Irr(G)$ is always at least the number of involutions, and there is equality precisely when $\ep(\chi) = 1$ for all real-valued $\chi \in \Irr(G)$.  This observation is the main strategy in computing Frobenius--Schur indicators in this paper.

If $K$ is a subfield of $\C$, and $\chi \in \Irr(G)$, then let $K(\chi)$ denote the smallest field containing $K$ and all of the values of $\chi$.  The {\em Schur index} of $\chi$ over $K$, which we denote by $m_{K}(\chi)$, is the smallest positive integer $m$ such that $m\chi$ is afforded by a representation defined over $K(\chi)$ (see \cite[Corollary 10.2]{IsBook}).  In particular, if $K = \R$ then $m_{\R}(\chi) = 1$ when $\ep(\chi)=1$ or $\ep(\chi) = 0$, and $m_{\R}(\chi) = 2$ when $\ep(\chi) = -1$.  For the case $K= \Q$, it has been conjectured that if $G$ is a quasisimple group, then $m_{\Q}(\chi) \leq 2$ for all $\chi \in \Irr(G)$, while it is known that in general $m_{\Q}(\chi)$ might be arbitrarily large for other finite groups, see \cite{Tu02} for example.  A relevant result relating this conjecture to our results in this paper is the Brauer--Speiser Theorem (see \cite[p.~171]{IsBook}), which states that if $\chi \in \Irr(G)$ and $\chi$ is real-valued, then $m_{\Q}(\chi) \leq 2$. 

\subsection{Exceptional groups} 

Here we recall basic facts about finite exceptional groups and finite simple groups of exceptional type.  Let $p$ be a prime, $\F_p$ a finite field with $p$ elements and $\overline{\F}_p$ a fixed algebraic closure.  Let $\bG$ be a connected reductive group over $\overline{\F}_p$, and let $F$ be a Steinberg map of $\bG$.  We consider the {\em exceptional groups} to be the groups of the form $\bG^F$ with $\bG$ a simple algebraic group, and either the isogeny type of $\bG$ is one of $G_2$, $F_4$, $E_6$, $E_7$, $E_8$, or  $\bG^F$ is either a Steinberg triality group ${^3 D_4}(q)$ with $q$ an integer power of $p$, or a Suzuki group of the form ${^2 B_2}(q^2)$ where $q^2 = 2^{2m+1}$ and $p=2$.  In the former case, $\bG^F$ can be one of the groups (see \cite[Section 1.19]{Ca85})
$$G_2(q), F_4(q), E_6(q)_{\ad}, E_6(q)_{\msc}, {^2 E_6}(q)_{\ad}, {^2 E_6}(q)_{\msc}, E_7(q)_{\ad}, E_7(q)_{\msc}, E_8(q),$$
with $q$ an integer power of $p$, or a Ree group of the form ${^2 G_2(q^2)}$ with $q^2 = 3^{2m+1}$ and $p=3$, or ${^2 F_4}(q^2)$ with $q^2 = 2^{2m+1}$ and $p=2$.

We recall that the groups $G_2(q)$ (unless $q=2$), $F_4(q)$, and $E_8(q)$ are finite simple groups, with the associated algebraic group $\bG$ having trivial (and so connected) center over $\overline{\F}_p$. The derived subgroup, $G_2(2)^{\prime}$, of $G_2(2)$ is simple, and is isomorphic to $\mathrm{PSU}(3,3)$. The groups ${^3 D_4}(q)$, ${^2 B_2}(q^2)$, and ${^2 G_2}(q^2)$ are finite simple groups, and ${^2 F_4}(q^2)$ is a finite simple group unless $q^2 = 2$, in which case the derived subgroup ${^2 F_4}(2)^{\prime}$ of index 2, or the Tits group, is a finite simple group.  The simple algebraic groups of adjoint type have trivial (and so connected) center, but the groups of simply connected type do not in general.  If $q \not\equiv 1($mod $3)$, then $E_6(q)_{\ad} = E_6(q)_{\msc} = E_6(q)$ is a finite simple group, and otherwise $E_6(q)_{\ad}$ has the finite simple group $E_6(q)$ as an index $3$ subgroup, and $E_6(q)_{\msc}$ has an order $3$ center $Z$ and $E_6(q)_{\msc}/Z$ is isomorphic to the finite simple group $E_6(q)$.  If $q \not\equiv -1($mod $3)$, then ${^2 E_6}(q)_{\ad} = {^2 E_6}(q)_{\msc} = {^2 E_6}(q)$ is a finite simple group, and otherwise ${^2 E_6}(q)_{\ad}$ has the finite simple group ${^2 E_6}(q)$ has an index 3 subgroup, and ${^2 E_6}(q)_{\msc}$ has an order $3$ center $Z$ with quotient ${^2 E_6}(q)_{\msc}/Z$ isomorphic to the finite simple group ${^2 E_6}(q)$.  Finally, if $q$ is even then $E_7(q)_{\ad} = E_7(q)_{\msc} = E_7(q)$ is a finite simple group, while if $q$ is odd then $E_7(q)_{\ad}$ has as an index 2 subgroup the finite simple group $E_7(q)$, and $E_7(q)_{\msc}$ has an order $2$ center $Z$ with quotient $E_7(q)_{\msc}/Z$ isomorphic to the finite simple group $E_7(q)$.

\subsection{Jordan decomposition of characters} \label{JDSection}

Let $\bG$ be a connected reductive group over $\bar{\F}_p$ with Steinberg map $F$.  If we take $\bT$ to be a maximal $F$-stable torus of $\bG$, with $\theta$ an irreducible character of $\bT^F$, then we denote by $R_{\bT}^{\bG}(\theta)$ the Deligne--Lusztig generalized character of $\bG^F$ defined by the $\bG^F$-conjugacy class of pairs $(\bT, \theta)$ (see \cite[Chapter 7]{Ca85} or \cite[Chapter 11]{dmbook}).

The \emph{unipotent characters} of $\bG^F$ are exactly the irreducible characters $\psi$ of $\bG^F$ such that $\langle \psi, R_{\bT}^{\bG}(\mathbf{1}) \rangle \neq 0$ for some $F$-stable maximal torus $\bT$ of $G$,  where $\mathbf{1}$ is the trivial character and $\langle \cdot, \cdot \rangle$ denotes the standard inner product on class functions of a finite group.  If $\bG$ is a disconnected group with connected component $\bG^{\circ}$, then the unipotent characters of $\bG^F$ are taken to be the irreducible constituents of $\mathrm{Ind}_{(\bG^{\circ})^F}^{\bG^F} (\psi)$, where $\psi$ varies over all unipotent characters of $(\bG^{\circ})^F$.

Now let $\bG^*$ be some fixed dual group of $\bG$ (with respect to some fixed Lie root data for $\bG$), with corresponding dual Steinberg map $F^*$.  By \cite[Proposition 13.13]{dmbook}, there is a bijection between $\bG^{*F^*}$-conjugacy classes of pairs $(\bT^*, s)$ where $\bT^*$ is an $F^*$-stable maximal torus of $\bG^*$, and $s \in \bT^{*F^*}$ is a semisimple element of $\bG^{*F^*}$, and $\bG^F$-conjugacy classes of pairs $(\bT, \theta)$ where $\theta$ is an irreducible character of $\bT^F$.  Given a semisimple $\bG^{*F^*}$-conjugacy class $(s)$, the {\em rational Lusztig series} $\cE(\bG^F, (s))$ of $\bG^F$ is the collection of irreducible characters $\chi \in \Irr(\bG^F)$ such that $\langle \chi, R_{\bT^F}^{\bG^F}(\theta) \rangle \neq 0$ where the $\bG^F$-class of the pair $(\bT, \theta)$ corresponds to the $\bG^{*F^*}$-class of the pair $(\bT^*, s)$ for some $F^*$-stable maximal torus $\bT^*$ of $\bG^{*F^*}$.  In particular, the collection of unipotent characters of $\bG^F$ is given by the Lusztig series $\cE(\bG^F, (1))$, since the classes of pairs $(\bT, \mathbf{1})$ and $(\bT^*, 1)$ correspond when $\bT$ and $\bT^*$ are dual tori.

For $\bG$ a connected reductive group, given a semisimple class $(s)$ of $\bG^{*F^*}$ and some fixed element $s$ from that class, a {\em Jordan decomposition map} is a bijection
$$J_s: \cE(\bG^F, (s)) \longrightarrow \cE(C_{\bG^*}(s)^{F^*}, (1)) \;\; \text{ such that } \;\; \langle \chi, R_{\bT}^{\bG}(\theta) \rangle = \pm \langle J_s (\chi), R_{\bT^*}^{C_{\bG^*}(s)}({\bf 1}) \rangle$$
for every $\chi \in \cE(\bG^F, (s))$, where $C_{\bG^*}(s)$ is the centralizer, and $\bT^*$ is a maximal $F^*$-stable torus such that $s \in \bT^{*F^*}$.  Such a bijection was shown to always exist when $Z(\bG)$ is connected by Lusztig \cite{Lu84}.  While we do not explicitly need it in this paper, such a map also always exists when $Z(\bG)$ is not connected, where if $C_{\bG^*}(s)$ is disconnected we still let $\cE(C_{\bG^*}(s)^{F^*}, (1))$ denote its set of unipotent characters (see \cite[Theorem 13.23]{dmbook}).  Using the maps $J_s$, we may parameterize the irreducible characters of $\bG^F$ by $\bG^{*F^*}$-conjugacy classes of pairs $(s, \psi)$ where $\psi$ is a unipotent character of $C_{\bG^*}(s)^{F^*}$, where the class of the pair $(s, \psi)$ corresponding to $\chi \in \Irr(\bG^F)$ is called its {\em Jordan decomposition}.  The most important property of the Jordan decomposition of characters that we will need is that if $\chi$ corresponds to the class of pairs $(s, \psi)$, then the degree of the character $\chi$ is given by $\chi(1) = [\bG^{*F^*}: C_{\bG^*}(s)^{F^*}]_{p'} \psi(1)$,
where the subscript $p'$ denotes the prime-to-$p$ part of that centralizer \cite[Remark 13.24]{dmbook}.

\section{Examples and related results} \label{ExamplesSection}

In this section, we consider the Frobenius--Schur indicators of the characters of the groups ${^3 D_4}(q)$, ${^2 B_2}(q^2)$, ${^2 G}_2(q^2)$, and $G_2(q)$.  While these results are known, we give these examples as motivation for the computational method used for other results in this paper, and to place the results in the context of other interesting questions.\\
\\
\noindent \emph{The group ${^3 D_4(q)}$}.  When $q$ is odd, Barry \cite[Step 1]{Ba88} makes exactly the computation which we carry out for other examples.  Namely, it is shown that when $q$ is odd, the group has $q^{16}+q^{12}+q^8+1$ involutions, and that this matches the sum of the character degrees which can be obtained from \cite[Table 4.4]{DeMi87}.  When $q$ is even, it follows from \cite[Section 8]{Th70} or \cite[Section 18]{AsSe76} that the number of involutions in ${^3 D_4(q)}$ is $q^{16} + q^{12}-q^4$.  It follows from \cite[Table 4.4]{DeMi87} that this is also the sum of the character degrees of the group when $q$ is even.  Thus $\ep(\chi) = 1$ for all irreducible characters of ${^3 D_4(q)}$ by the Frobenius--Schur involution count \eqref{FScount}.\\
\\
\noindent \emph{The group ${^2 B_2(q^2)}$, $q^2 = 2^{2m+1}$}.  The classes and characters of this group are computed by Suzuki \cite{Su60, Su62}.  It follows from \cite[Proposition 7]{Su62} that the number of involutions in ${^2 B_2(q^2)}$ is $q^6-q^4+q^2$.  From \cite[Theorem 13]{Su62}, the two characters labeled as $W_l$ are not real-valued.  The sum of the degrees of the remaining characters is $q^6 - q^4 + q^2$, and so $\ep(\chi)=1$ for all real-valued irreducible characters of ${^2 B_2(q^2)}$ again by \eqref{FScount}.\\
\\
\noindent \emph{The group ${^2 G_2(q^2)}$, $q^2 = 3^{2m+1}$}.  It follows from \cite[Theorem 8.5]{Re61} and \cite[Introduction]{Wa66} that the number of involutions in ${^2 G_2(q^2)}$ is $q^8 - q^6+q^4+1$.  The characters which are not real-valued are given in the table of Ward \cite[p.~87]{Wa66}, labeled as $\xi_5$, $\xi_6$, $\xi_7$, $\xi_8$, $\xi_9$, and $\xi_{10}$.  Their degrees are given, as are all of the degrees for the irreducible characters of ${^2 G_2(q^2)}$.  Taking the sum of the degrees of the real-valued irreducible characters yields the number of involutions in the group, and so $\ep(\chi) = 1$ for all real-valued $\chi \in \Irr({^2 G_2(q^2)})$.\\
\\
\noindent \emph{The group $G_2(q)$}.  First, when $q$ is odd it follows from \cite[Theorem 4.4 and p.~209]{Ch68} and \cite[Table 2]{En69} or from \cite[p.~282]{Iw70} that the number of involutions in $G_2(q)$ is $q^8+q^6+q^4+1$.  It follows from \cite[Propositions 2.5 and 2.6]{En69} that when $q$ is even the number of involutions in $G_2(q)$ is $q^8 + q^6 -q^2$.  Next, it follows from \cite[Table 1]{Ge03} that the cuspidal unipotent characters $G_2[\theta]$ and $G_2[\theta^2]$ are not real-valued, and from \cite[p.~478]{Ca85} we find these characters each have degree $\frac{1}{3} q(q^2 - 1)^2$.  We may take the sum of the degrees of all other characters, by using the tables in \cite{ChRe74, En76} for $q$ odd and \cite{EnYa86} for $q$ even (or by using \cite{LuWWW}), and we find that this matches the number of involutions.  Thus $\ep(\chi) = 1$ for each real-valued irreducible character $\chi$ of $G_2(q)$. \\
\\
\indent We mention other relevant results for the groups in the above examples.  It is known that for every $G$ above, every $\chi \in \Irr(G)$ satisfies $m_{\Q}(\chi) = 1$.  This is proved for $G = {^2 B_2(q^2)}$ or ${^2 G_2(q^2)}$ by Gow \cite[Theorem 9]{Go762}, for $G = {^3 D_4(q)}$ by Barry when $q$ is odd \cite{Ba88} and by Ohmori when $q$ is even \cite[Theorem 3]{Oh03}, and for $G= G_2(q)$ when $q$ is odd by Ohmori \cite{Oh85} and when $q$ is even by Enomoto and Ohmori \cite{EnOh93}.  In fact, these results on the Schur index imply that the Frobenius--Schur indicator is $1$ for each irreducible real-valued character of each of these groups. 

Recall that an element $g$ of a finite group $G$ is {\em real} if $g$ is conjugate to $g^{-1}$ in $G$, and that the number of conjugacy classes of real elements in $G$ is equal to the number of real-valued characters in $\Irr(G)$.  We may also ask whether every conjugacy class of real elements is necessarily {\em strongly real} for each of the groups $G$ above, where an element (or conjugacy class) $g$ of a group $G$ is strongly real in $G$ if there exists $h \in G$ such that $h^2 = 1$ and $h^{-1} g h = g^{-1}$.  For $G = {^3 D_4(q)}$, it is proved by Vdovin and Gal$^\prime$t \cite[Theorem 1]{VdGa10} that all classes of $G$ are strongly real.  Suzuki proved \cite[Section 10]{Su62} that all real classes of the groups ${^2 B_2(q^2)}$ are strongly real.  For $G = {^2 G_2(q^2)}$, we invoke a result of Gow \cite[Corollary 1]{Go761} which says that if a finite group has an abelian Sylow $2$-subgroup, then all of its real elements are strongly real if and only if all real-valued irreducible characters of the group have Frobenius--Schur indicator $1$.  The latter holds for these groups as discussed above, and the Sylow 2-subgroup is abelian by \cite[Theorem 8.5]{Re61}, and so all real classes are strongly real in ${^2 G_2(q^2)}$.

Singh and Thakur \cite[Corollary A.1.6]{SiTh08} proved that if $q$ is not a power of $2$ or $3$, then all real elements of $G_2(q)$ are strongly real.  We now address the remaining cases.

\begin{proposition} \label{G223} All real classes of $G_2(q)$ are strongly real. 
\end{proposition}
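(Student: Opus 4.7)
The plan is to combine the character-sum criterion for strong reality with the known generic character table of $G_2(q)$, restricting attention to the cases $q = 2^a$ and $q = 3^a$ left open by \cite{SiTh08}. Recall the classical consequence of the Frobenius--Schur theory that for any finite group $G$ and any $g \in G$,
\[
\#\{(t_1, t_2) \in G \times G : t_1^2 = t_2^2 = 1,\ t_1 t_2 = g\} = \sum_{\chi \in \Irr(G)} \ep(\chi) \chi(g),
\]
and that $g$ is strongly real in $G$ exactly when this count is positive, i.e., when $g$ is a product of two involutions of $G$.

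First I would invoke the Frobenius--Schur calculation already carried out for $G_2(q)$ earlier in this section, which shows $\ep(\chi) = 1$ for every real-valued $\chi \in \Irr(G_2(q))$. Hence the right-hand side above collapses to $\sum_{\chi \text{ real}} \chi(g)$, and for $g$ real this sum is automatically a real number (in fact a non-negative rational integer, as it counts ordered pairs of involutions with product $g$). Since the identity and every involution of $G_2(q)$ is inverted by itself and hence trivially strongly real, the remaining task is to verify strict positivity of $\sum_{\chi \text{ real}} \chi(g)$ for each real class $g$ of order at least $3$.

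For this I would consult the generic character tables: Enomoto--Yamada \cite{EnYa86} for $q = 2^a$ and Chang--Ree \cite{ChRe74} together with Enomoto \cite{En76} for $q = 3^a$, cross-checked against L\"{u}beck's data \cite{LuWWW}. From these tables one reads off the real classes (either by reality of all character values on the class, or equivalently via the standard inversion involution on the parameters indexing each torus-type family), and for each parametric family of real classes one writes $\sum_{\chi \text{ real}} \chi(g)$ as a closed-form polynomial expression in $q$ and the parameter indexing the class. Proposition \ref{G223} then reduces to verifying that each such polynomial takes a strictly positive value for every admissible choice of parameters.

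The main obstacle is the case-by-case bookkeeping. $G_2(q)$ has roughly a dozen parametric families of conjugacy classes (indexed by the various maximal-torus types together with regular/subregular/unipotent strata), and for each real family the partial sum of real-valued character values must be computed in closed form and confirmed positive uniformly in $q$ and in the class parameter. Particular care is needed at small $q$, most notably $q = 2$, for which the Acknowledgements indicate that corrections were made to the relevant table; at such values, degenerations in the generic table or collapses of certain families may necessitate separate handling.
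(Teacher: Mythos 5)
The overall shape of your plan---reduce to $q$ a power of $2$ or $3$ via \cite{SiTh08}, then verify a character-theoretic positivity criterion on the generic character table---is the right one and is what the paper does. But the identity you build everything on is false. The quantity $\sum_{\chi\in\Irr(G)}\ep(\chi)\chi(g)$ is the number of \emph{square roots} of $g$, i.e.\ $\#\{t\in G: t^2=g\}$; it is not the number of ordered pairs $(t_1,t_2)$ with $t_1^2=t_2^2=1$ and $t_1t_2=g$. Already in $G=\mathbb{Z}/2\mathbb{Z}$ the nontrivial element $s$ gives $\sum_\chi\ep(\chi)\chi(s)=1-1=0$, yet $s=1\cdot s=s\cdot 1$ is a product of two elements squaring to $1$ (and is trivially strongly real); in $S_3$ a $3$-cycle gives $1+1-1=1$ on the right but $3$ on the left. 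The correct count of such pairs is
\[
\frac{1}{|G|}\sum_{\chi\in\Irr(G)}\frac{\overline{\chi(g)}}{\chi(1)}\Bigl(\sum_{t^2=1}\chi(t)\Bigr)^{2},
\]
equivalently a sum of class multiplication coefficients $n_{123}$ over pairs of involution classes $C_1,C_2$, and the inner sums $\sum_{t^2=1}\chi(t)$ are not determined by the indicators $\ep(\chi)$ alone. Consequently your reduction ``all real-valued $\chi$ have $\ep(\chi)=1$, hence the count collapses to $\sum_{\chi\ \mathrm{real}}\chi(g)$'' does not go through; indeed, total orthogonality by itself does not imply that every real class is strongly real---Gow's theorem quoted in Section 3 needs an abelian Sylow $2$-subgroup to draw that conclusion, and $G_2(q)$ for the relevant $q$ does not have one.

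What the paper actually does is compute, in {\sf CHEVIE}, the structure constants $n_{123}=\frac{|C_1||C_2|}{|G|}\sum_\chi\chi(C_1)\chi(C_2)\overline{\chi(C_3)}/\chi(1)$ with $C_1,C_2$ classes of involutions and $C_3$ an arbitrary real class, and records in Table \ref{G2CMC} that for each real $C_3$ some such coefficient is nonzero. Your plan of reading off the real classes from the generic tables and checking positivity family-by-family in $q$ and the class parameter is exactly the right shape, but the quantity to be evaluated must be these class multiplication coefficients, which require the actual character values on the involution classes, not merely the Frobenius--Schur indicators; the earlier computation that $\ep(\chi)=1$ for all real-valued $\chi$ of $G_2(q)$ cannot be leveraged in the way you propose.
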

\begin{proof} As just mentioned, Singh and Thakur prove this statement when $q$ is not a power of $2$ or $3$.  The generic character table for $G_2(q)$ when $q$ is a power of $2$ or $3$ is in the package {\sf CHEVIE} \cite{CHEVIE}.  We may use the character table to prove the statement by using the following result (see \cite[p.~125]{LuPa10}).  If $C_1$, $C_2$, and $C_3$ are conjugacy classes of the finite group $G$, then the number $n_{123}$ of pairs $(g_1,g_2)$ such that $g_1 \in C_1$, $g_2 \in C_2$, and the product $g_1 g_2$ is equal to a fixed element $g_3 \in C_3$ is given by
$$n_{123} = \frac{|C_1||C_2|}{|G|} \sum_{ \chi \in \Irr(G)} \frac{\chi(C_1) \chi(C_2) \overline{\chi(C_3)}}{\chi(1)}.$$
These are the \emph{class multiplication coefficients}, and we must show that for every real class $C_3$ of $G_2(q)$, there are classes $C_1$ and $C_2$ of involutions such that $n_{123} \neq 0$.  This is indeed the case for $G_2(q)$ with $q$ a power of $2$ or $3$, and our results from the computation are given in Table \ref{G2CMC}.
\end{proof}

\section{The groups $F_4(q)$ and ${^2 F_4(q^2)}$} \label{F4Section}

We now consider the finite exceptional group $F_4(q)$.  We have the following result, where our notation for the cuspidal unipotent characters of $F_4(q)$ is that of Lusztig \cite{Lu84}.

\begin{theorem} \label{F4}  For every prime power $q$, the only irreducible characters of $F_4(q)$ which are not real-valued are the cuspidal unipotent characters $F_4[i]$, $F_4[-i]$, $F_4[\theta]$, and  $F_4[\theta^2]$.  All other irreducible characters $\chi$ of $F_4(q)$ satisfy $\ep(\chi) = 1$.  That is, $m_{\R}(\chi) = 1$ for all $\chi \in \Irr(F_4(q))$.
\end{theorem}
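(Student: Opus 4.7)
The plan is to apply the Frobenius--Schur involution count \eqref{FScount} using the strategy described for the examples of Section \ref{ExamplesSection}. I would proceed in three steps: first, identify precisely which irreducible characters of $F_4(q)$ are not real-valued; second, obtain a closed-form expression for the number of involutions in $F_4(q)$; third, show that the sum of the degrees of all the real-valued characters already equals this involution count. By \eqref{FScount} and the bound $\ep(\chi)\le 1$, this forces $\ep(\chi)=1$ for every real-valued $\chi\in\Irr(F_4(q))$, and in particular rules out $\ep(\chi)=-1$.

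For the identification step, Geck's determination of the fields of character values of unipotent characters of the exceptional groups \cite{Ge03,Ge04,Ge05} shows directly that the only unipotent characters of $F_4(q)$ that fail to be real-valued are the four cuspidal unipotents $F_4[i]$, $F_4[-i]$, $F_4[\theta]$, $F_4[\theta^2]$, occurring in two complex-conjugate pairs. To handle non-unipotent characters I would work through the Jordan decomposition set up in Section \ref{JDSection}. Since the simple algebraic group $\bG$ of type $F_4$ has trivial (hence connected) center and is self-dual, $\bG^{*F^*}$ may be identified with $F_4(q)$ itself and Lusztig's Jordan decomposition applies. A character $\chi\in\cE(\bG^F,(s))$ with label $(s,\psi)$ is real-valued provided the semisimple class $(s)$ of $\bG^{*F^*}$ coincides with $(s^{-1})$ and the unipotent character $\psi$ of $C_{\bG^*}(s)^{F^*}$ is real-valued. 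I would then run through L\"ubeck's list of semisimple conjugacy classes of $F_4(q)$ in \cite{LuWWW1}, verifying in each case that $s$ is conjugate to $s^{-1}$, and check using Geck's tables, applied now to the connected reductive centralizer $C_{\bG^*}(s)$ (whose simple factors are of classical type for every non-central $s$), that each unipotent character of $C_{\bG^*}(s)^{F^*}$ is real-valued.

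For the involution count, a closed form valid for each parity of $q$ can be assembled from standard computations with centralizers of involutions in finite groups of Lie type; the result will be a polynomial in $q$. For the comparison of degree sums, L\"ubeck's generic character degree data \cite{LuWWW} together with the formula $\chi(1)=[\bG^{*F^*}:C_{\bG^*}(s)^{F^*}]_{p'}\psi(1)$ make the sum of the real-valued character degrees computable as an explicit polynomial in $q$. The remaining task is to match this polynomial with the involution count in the two parities, after subtracting the degrees of the four excluded cuspidal unipotent characters. Once Theorem \ref{F4} is in hand, Corollary \ref{F4Schur} is immediate from the Brauer--Speiser theorem recalled in Section \ref{SchurSec}.

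The main obstacle I anticipate lies in the identification step for non-unipotent characters: systematically running through every semisimple class type in $F_4(q)$ and certifying both that $s$ is real in $F_4(q)$ and that every unipotent character of the centralizer $C_{\bG^*}(s)^{F^*}$ is real-valued. A secondary, more mechanical obstacle is the polynomial-in-$q$ bookkeeping required to verify the final degree identity in both the even and odd characteristic cases.
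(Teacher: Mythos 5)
Your overall skeleton---Geck's identification of the four non-real cuspidal unipotent characters, a closed-form involution count, L\"ubeck's degree data, and the Frobenius--Schur identity \eqref{FScount}---is exactly the paper's proof. However, you have inserted an extra step that is both unnecessary and, as you describe it, gapped. The counting argument already proves for free that every non-unipotent character is real-valued: once the four cuspidal unipotents are known to be non-real, the sum of the degrees of the real-valued characters is at most $T-N$, where $T$ is the total character degree sum and $N$ is the sum of the four cuspidal unipotent degrees, while by \eqref{FScount} and $\ep(\chi)\le 1$ it is at least the number of involutions. Since these two quantities coincide (in both parities of $q$), both inequalities are equalities, which simultaneously shows that the non-real characters are exactly the four listed ones and that every real-valued character has indicator $1$. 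So the case-by-case Jordan decomposition analysis over all semisimple classes of $F_4(q)$ is redundant; the paper does not perform it.

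The gap in that redundant step is worth flagging because it reflects a misreading of the reality criterion. By Theorem \ref{RealJord}, a character with Jordan decomposition $(s,\psi)$ is real-valued if and only if $s$ is conjugate to $s^{-1}$ by some $h\in\bG^{*F^*}$ with ${}^h\psi=\bar\psi$; when $\psi$ is real-valued this still requires $\psi$ to be fixed by the automorphism of $C_{\bG^*}(s)^{F^*}$ induced by $h$, which is not automatic. That invariance is precisely what Lemma \ref{GalAct} and Lemma \ref{RealLemma} supply, and their hypotheses exclude simple factors of type $D_{2n}$ and, in characteristic $2$, of type $C_2$ and $F_4$. The root system $F_4$ contains subsystems of type $D_4$ and $B_2$, so $F_4(q)$ has semisimple centralizers to which those lemmas do not apply, and your proposed verification would stall exactly there without further ad hoc arguments. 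The counting trick is what lets the paper (and you, if you drop that step) sidestep this entirely; the remainder of your plan, including the deduction of Corollary \ref{F4Schur} from Brauer--Speiser, is correct.
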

\begin{proof} When $q$ is odd, the number of involutions in $F_4(q)$ can be computed using \cite{Iw70} or \cite{Sho74}, and is given by
$$ q^{28} + q^{26} + 2q^{24} + 2q^{22} + 2 q^{20} + 2q^{18} + 2q^{16} +q^{14} + q^{12} + q^8.$$
When $q$ is even, the number of involutions in $F_4(q)$ may be computed using \cite[Corollary 1]{Sh74} or \cite[Section 13]{AsSe76}, and is given by
$$ q^{28} + q^{26} + q^{24} + q^{22} -q^{20} -q^{16} - q^{14} - q^{10} + q^8.$$
The fact that the four listed cuspidal unipotent characters of $F_4(q)$ are not real-valued (independently of $q$) follows from Geck \cite[Table 1]{Ge03}, and the degrees of these characters can be found in the table of Lusztig \cite[p.~372]{Lu84}.  Taking the sum of all of the character degrees of $F_4(q)$ using the data in \cite{LuWWW}, with the result listed in Table \ref{SumTable}, and subtracting the degrees of the unipotent characters which are not real-valued, we obtain precisely the numbers of involutions given above, whether $q$ is even or odd.  The result follows.
\end{proof}

As mentioned in Section \ref{SchurSec}, it has been conjectured that if $G$ is a quasisimple group, then $m_{\Q}(\chi) \leq 2$ for all $\chi \in \Irr(G)$.  As an application of Theorem \ref{F4} and previous work of Geck \cite{Ge03, Ge04}, we are able to conclude this statement indeed holds for the case that $G = F_4(q)$.

\begin{corollary} \label{F4Schur} For every prime power $q$, and every $\chi \in \Irr(F_4(q))$, we have $m_{\Q}(\chi) \leq 2$.
\end{corollary}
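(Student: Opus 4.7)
The plan is to split $\Irr(F_4(q))$ into the real-valued characters and the non-real-valued characters, using the dichotomy provided by Theorem \ref{F4}, and bound the Schur index $m_{\Q}(\chi)$ on each piece by a different mechanism.

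First, for any real-valued $\chi \in \Irr(F_4(q))$, I would simply invoke the Brauer--Speiser theorem recalled at the end of Section \ref{SchurSec}, which gives $m_{\Q}(\chi) \leq 2$ with no further work needed. Theorem \ref{F4} tells us that in fact all such $\chi$ have $\ep(\chi) = 1$, but that is not required here; only the real-valuedness is used to trigger Brauer--Speiser. The content of Theorem \ref{F4} enters at the next step, because it identifies the remaining characters explicitly.

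By Theorem \ref{F4}, the only characters outside this real-valued class are the four cuspidal unipotent characters $F_4[i]$, $F_4[-i]$, $F_4[\theta]$, $F_4[\theta^2]$. For these I would appeal to the work of Geck \cite{Ge03, Ge04}, which determines the character fields and the rational Schur indices of all unipotent characters of the exceptional groups. In particular Geck's tables give $m_{\Q}(\chi) \leq 2$ for each of these four cuspidal unipotent characters of $F_4(q)$ (indeed, the fields of values are quadratic, namely $\Q(i)$ or $\Q(\theta)$, and the Schur index is shown to be $1$ there). Combining the two cases yields $m_{\Q}(\chi) \leq 2$ for every $\chi \in \Irr(F_4(q))$, as claimed.

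The main obstacle is conceptual rather than computational: we need a complete classification of which irreducible characters fail to be real-valued, since Brauer--Speiser cannot handle non-real characters. That classification is precisely what Theorem \ref{F4} provides, and once we have it, the non-real case is reduced to a finite, already-handled list inside Geck's unipotent tables. So the corollary is essentially the combination of Theorem \ref{F4} (to isolate the exceptional characters) with Geck's explicit Schur index computations (to control them).
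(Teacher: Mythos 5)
Your proposal is correct and is essentially identical to the paper's own proof: the paper likewise cites Geck \cite{Ge03, Ge04} to get $m_{\Q}(\chi)=1$ for the four non-real cuspidal unipotent characters $F_4[i]$, $F_4[-i]$, $F_4[\theta]$, $F_4[\theta^2]$, and applies the Brauer--Speiser theorem to the remaining characters, which Theorem \ref{F4} shows are all real-valued. No gaps.
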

\begin{proof} It is proved by Geck in \cite[Table 1 and Section 6]{Ge03} and \cite[Corollary 3.2]{Ge04} that if $\chi$ is one of the cuspidal unipotent characters $F_4[i]$, $F_4[-i]$, $F_4[\theta]$, or $F_4[\theta^2]$, then $m_{\Q}(\chi) = 1$.  By Theorem \ref{F4}, the rest of the irreducible characters of $F_4(q)$ are real-valued, and so the result now follows from the Brauer--Speiser Theorem. \end{proof}

In fact, it follows from the work of Geck \cite{Ge03, Ge04} that $m_{\Q}(\chi)=1$ for all unipotent characters $\chi$ of $F_4(q)$.  We expect this stronger result to hold for all irreducible characters of $F_4(q)$.  We also expect that all real classes of $F_4(q)$ are strongly real, which can be checked for the case $F_4(2)$ using {\sf GAP} \cite{GAP} as in the proof of Proposition \ref{G223}.  

We now consider the Ree groups ${^2 F_4}(q^2)$ with $q^2 = 2^{2m+1}$.  In the following, the notation for the unipotent characters of ${^2 F_4(q^2)}$ is taken from the paper of Malle \cite{Ma90}, and the notation for the non-unipotent characters is that used in the paper of Himstedt and Huang \cite{HiHu12}.

\begin{theorem} \label{2F4} The only irreducible character of ${^2 F_4(q^2)}$ with Frobenius--Schur indicator $-1$ is the cuspidal unipotent character $\chi_{21}$.  The only irreducible characters which are not real-valued are the unipotent characters $\chi_5$, $\chi_6$, $\chi_7$, $\chi_8$, $\chi_{15}$, $\chi_{16}$, $\chi_{17}$, and $\chi_{18}$, and the non-unipotent characters ${_G\chi_{43}(k)}$, ${_G\chi_{44}(k)}$, ${_G\chi_{47}(k)}$, and ${_G\chi_{48}(k)}$.
\end{theorem}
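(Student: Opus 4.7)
The plan is to mirror the strategy used in the proof of Theorem \ref{F4}, but now with the added subtlety that the Frobenius--Schur involution count \eqref{FScount} must accommodate one character with indicator $-1$. Specifically, if the theorem is correct, then the sum of the degrees of the real-valued irreducible characters of ${^2 F_4(q^2)}$, minus twice the degree of the cuspidal unipotent character $\chi_{21}$, should equal the number of involutions of ${^2 F_4(q^2)}$.

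First I would determine the number of involutions in ${^2 F_4(q^2)}$. This can be extracted from the generic conjugacy class data of ${^2 F_4(q^2)}$ in \textsf{CHEVIE}, or equivalently from the information on centralizers of $2$-elements in the literature; the Sylow $2$-subgroup structure of ${^2 F_4(q^2)}$ is well-studied, and there is a unique class of involutions with centralizer order a specific polynomial in $q^2$. Second, I would isolate the non-real-valued unipotent characters from Geck's tables \cite{Ge03}, which gives the list $\chi_5,\chi_6,\chi_7,\chi_8,\chi_{15},\chi_{16},\chi_{17},\chi_{18}$ (these come in complex-conjugate pairs), and identifies $\chi_{21}$ as the unique character with indicator $-1$. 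Third, for the non-unipotent characters I would consult the generic character table of ${^2 F_4(q^2)}$, now accessible in \textsf{CHEVIE} and tabulated in \cite{HiHu12, Ma90}, and identify precisely those character families whose parameter choices yield genuinely complex values; by inspection one expects this to pin down the four families ${_G\chi_{43}(k)}$, ${_G\chi_{44}(k)}$, ${_G\chi_{47}(k)}$, ${_G\chi_{48}(k)}$ as the remaining non-real-valued characters.

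Having assembled this list, I would then sum the degrees of \emph{all} irreducible characters of ${^2 F_4(q^2)}$ (this total is routinely computable, and in our notation will be recorded in Table \ref{SumTable}), subtract the degrees of the non-real-valued characters just listed (summing over the appropriate parameter ranges $k$ for the non-unipotent families), and subtract an extra $2\chi_{21}(1)$ to account for the negative contribution of $\chi_{21}$ to the left-hand side of \eqref{FScount}. The theorem then reduces to checking that the resulting polynomial in $q^2$ matches the number of involutions computed in the first step. Since each $\chi \in \Irr({^2 F_4(q^2)})$ satisfies $\ep(\chi) \leq 1$, matching these two quantities forces every real-valued character other than $\chi_{21}$ to have indicator $1$, which yields the full statement.

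The main obstacle I expect is bookkeeping rather than any conceptual difficulty: one must be careful about the parametrization of the non-unipotent families ${_G\chi_{43}}, \ldots, {_G\chi_{48}}$, in particular the range of the index $k$, the sizes of Galois orbits on these families, and the conventions used in \cite{HiHu12} versus \cite{Ma90} and \textsf{CHEVIE}. Getting the involution count exactly right (including the dependence on $q^2 = 2^{2m+1}$) and matching it polynomially against the degree sum will be the part most susceptible to error, but once both sides are in hand the verification is a direct polynomial identity.
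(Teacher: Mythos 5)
Your proposal follows essentially the same route as the paper: count the involutions (the paper uses Shinoda's classification of involution classes and their centralizer orders), identify the non-real unipotent characters and $\ep(\chi_{21})=-1$ (the paper cites Malle's values on the unipotent classes $u_3$ and Geck's theorem), read off the non-real non-unipotent families from the Himstedt--Huang tables, and then match the total degree sum minus the degrees of the non-real characters minus $2\chi_{21}(1)$ against the involution count. One small correction: ${^2 F_4(q^2)}$ has \emph{two} classes of involutions, not one, but this does not affect your argument since only the total number of involutions enters the Frobenius--Schur count.
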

\begin{proof} First, the classes of involutions in the group are given in \cite[Corollary 2]{Sh74}, and the orders of their centralizers can be obtained from \cite[Theorem 2.1]{Sh74}.  Taking the sum of the indices of these centralizers and adding $1$ yields that the total number of involutions in ${^2 F_4(q^2)}$ is given by
$$ q^{28} - q^{26}+q^{24}+q^{22}-q^{20}+q^{16}-q^{14}+q^{10}-q^8.$$

The fact that the cuspidal unipotent character $\chi_{21}$ satisfies $\ep(\chi_{21})=-1$ is a result of Geck \cite[Theorem 1.6]{Ge03}, also given by Ohmori \cite{Oh03}.  That the listed unipotent characters are not real-valued follows from the work of Malle, where these unipotent characters take non-real values on the unipotent classes listed as $u_3$ in \cite[Tabelle 2]{Ma90}.  The non-unipotent characters listed take non-real values, as computed in the paper of Himstedt and Huang \cite[Table B.12]{HiHu12}, on the class listed as $c_{1, 11}$.  The degrees of these non-unipotent characters are listed in \cite[Table A.14]{HiHu12}.  

The sum of all character degrees of ${^2 F_4}(q^2)$ may be computed using the data in \cite{LuWWW}, and the result is listed in Table \ref{SumTable}.  From this, we subtract the degrees of the characters which are not real-valued, and we subtract twice the degree of the character $\chi_{21}$ with Frobenius--Schur indicator $-1$.  The result is precisely the number of involutions in the group, and so the claim follows by the Frobenius--Schur involution count.
\end{proof}

We may also consider properties of the conjugacy classes of ${^2 F_4}(q^2)$ by using its generic character table in {\sf CHEVIE}.  This group has two classes of involutions, which are {\em class type 2} and {\em class type 3} in {\sf CHEVIE}.   By computing the class multiplication coefficients as in the proof of Proposition \ref{G223}, we find that each of the conjugacy classes labeled as {\em class types 7}, {\em 10}, {\em 11}, and {\em 15} is real but not strongly real.  We thank Frank Himstedt for assistance in this calculation.

For the Tits group ${^2 F_4(2)'}$, it can be checked using {\sf GAP} \cite{GAP} that there are 16 real-valued characters and 6 characters that are not real-valued. All of the real-valued characters have Frobenius--Schur indicator 1, and each of its real conjugacy classes is strongly real.

\section{Automorphisms and unipotent characters} \label{AutSection}

Let $\bG$ be a connected reductive group over $\overline{\FF}_p$ and let $F$ be a Steinberg map for $\bG$.  An automorphism $\sigma$ of the algebraic group $\bG$ which commutes with $F$ is said to be \emph{defined over $F$}.  Then $\sigma$ defines an automorphism of the finite group $\bG^F$.  We let $\Out(\bG, F)$ denote the collection of all outer automorphisms of $\bG$ which are defined over $F$.

\begin{lemma} \label{GalAct} Let $\bG$ be a connected reductive group over $\overline{\FF}_p$ such that no two simple factors of $\bG$ are isogenous, and such that $\bG$ has no simple factor which is of type $D_{2n}$.  Further, if $p=2$ assume $\bG$ has no simple factors of type $C_2$ or $F_4$, and if $p=3$ assume $\bG$ has no simple factors of type $G_2$.  Let $F$ be a Steinberg map for $\bG$, and $\sigma \in \Out(\bG, F)$.  If $\psi$ is a unipotent character of $\bG^F$, then ${^\sigma \psi} = \psi$.
\end{lemma}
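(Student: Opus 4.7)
The plan is to reduce the statement to the case where $\bG$ is simple, and then to appeal to the classification of outer automorphisms of simple algebraic groups together with known results on how diagram automorphisms act on unipotent characters.

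Write $\bG = \bG_1 \times \cdots \times \bG_k$ as a product of simple factors. Any algebraic group automorphism of $\bG$ permutes the $\bG_i$, and two simple factors lying in a common orbit are necessarily isogenous. Since no two of the $\bG_i$ are isogenous by hypothesis, both $\sigma$ and $F$ preserve each simple factor. Hence $\bG^F = \prod_i \bG_i^F$, every unipotent character $\psi$ of $\bG^F$ decomposes as an outer tensor product $\psi_1 \boxtimes \cdots \boxtimes \psi_k$ of unipotent characters of the factors $\bG_i^F$, and ${^\sigma \psi}$ decomposes as the outer tensor product of the $\sigma_i$-translates, where $\sigma_i = \sigma|_{\bG_i}$. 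Consequently it suffices to treat the case when $\bG$ is simple.

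When $\bG$ is simple, $\Out(\bG)$ is generated by the symmetries of the Dynkin diagram together with the exceptional isogenies that arise when $p=2$ and $\bG$ is of type $B_2$ or $F_4$, and when $p=3$ and $\bG$ is of type $G_2$. The hypotheses of the lemma exclude these three exceptional isogeny cases, so any $\sigma \in \Out(\bG, F)$ is represented by a diagram automorphism. Combined with the exclusion of type $D_{2n}$, the only non-trivial possibilities left to check are $\bG$ of type $A_n$ with $n \geq 2$, $\bG$ of type $D_n$ with $n \geq 5$ odd, or $\bG$ of type $E_6$. In each of these cases I would verify from Lusztig's parameterization of unipotent characters \cite[Chapters 4 and 13]{Lu84} that the diagram automorphism acts trivially: in type $A_n$, unipotent characters are labelled by partitions of $n+1$, which are visibly preserved; in type $D_n$, the only unipotent characters that can be swapped by the graph automorphism are those attached to ``degenerate'' symbols (with $S=T$), and a short rank computation shows that such symbols only exist when $n$ is even, so none occur for odd $n$; in type $E_6$, direct inspection of Lusztig's explicit list verifies that the order-two diagram automorphism fixes each of the thirty unipotent characters of both $E_6(q)$ and ${^2 E_6}(q)$.

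The main obstacle is verifying exhaustively that the hypotheses have eliminated precisely those situations in which a rational outer automorphism can move a unipotent character: the $D_{2n}$ restriction kills the degenerate-symbol pairs (including $D_4$ triality), the characteristic hypotheses kill the exceptional isogenies that give the Suzuki and Ree-type automorphisms, and the product hypothesis ensures that $F$ and $\sigma$ do not permute simple factors. After this bookkeeping, the three simple-type verifications above are standard in the theory of unipotent characters, and the product reduction at the start then extends the conclusion to an arbitrary $\bG$ satisfying the hypotheses.
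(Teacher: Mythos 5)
Your reduction begins with ``Write $\bG = \bG_1\times\cdots\times\bG_k$ as a product of simple factors,'' but a connected reductive group is not in general a direct product of simple algebraic groups: it may have a positive-dimensional central torus, and its derived subgroup is only an almost-direct product of its simple factors, which can intersect in nontrivial central subgroups. Consequently $\bG^F$ need not equal $\prod_i\bG_i^F$, and unipotent characters need not split as outer tensor products over the simple factors. This is not a peripheral omission: in the application of this lemma (Lemma \ref{RealLemma}, feeding into Theorems \ref{E7} and \ref{E8}) the group in question is a centralizer $C_{\bG^*}(s)$ of type $E_6(q).(q-1)$, ${^2 E_6}(q).(q+1)^2$, $E_7(q).A_1(q)$, and so on --- groups which carry central tori or are genuine central products --- so your argument does not cover the cases the lemma is actually used for. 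The paper closes this gap by passing through the adjoint quotient $\phi\colon\bG\to\bG_{\ad}$ (which \emph{is} a direct product of simple adjoint groups) and invoking Lusztig's result \cite[Proposition 3.15]{Lu78} that every unipotent character of $\bG^F$ comes from a unipotent character of $\bG_{\ad}^F$ by restriction to the image of $\bG^F$; you need this step or an equivalent substitute.

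A second, smaller gap: in the simple case you assert that any $\sigma\in\Out(\bG,F)$ ``is represented by a diagram automorphism.'' That is true modulo inner automorphisms of the algebraic group, but an inner automorphism $\mathrm{int}(x)$ commuting with $F$ only forces $x^{-1}F(x)\in Z(\bG)$; unless $Z(\bG)=1$ this does not put $x$ in $\bG^F$, and $\mathrm{int}(x)$ then induces a diagonal automorphism of $\bG^F$ whose triviality on unipotent characters must still be justified. The paper sidesteps this by working with adjoint groups, where $Z(\bG)=1$ forces $x\in\bG^F$. By contrast, your treatment of the combinatorial core is a legitimate alternative to the paper's citations of Malle and Taylor: the partition labels in type $A_n$, the parity computation showing that degenerate symbols have even rank (so $D_n$ with $n$ odd has none), and the inspection in type $E_6$ are all correct and standard. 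But as written, the proof establishes the lemma only for groups that are direct products of simple factors, not for arbitrary connected reductive $\bG$ satisfying the hypotheses.
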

\begin{proof} First consider the case that $\bG$ is a simple algebraic group with Steinberg map $F$, and $\bG$ is not of type $D_{2n}$, not of type $C_2$ or $F_4$ if $p=2$, and not of type $G_2$ if $p=3$.  It follows from \cite[Proposition 3.7]{Ma07},  \cite[Theorem 2.5]{Ma08}, and  \cite[Lemma 1.64]{JT12}, that ${^\sigma \psi} = \psi$ for every unipotent character of $\bG^F$ and every $\sigma \in \Out(\bG, F)$.  If $\bG$ is simple of adjoint type, suppose now that $\sigma$ is defined over $F$ but inner, say defined by $x \in \bG$.  Then for every $g \in \bG$, we have $x (F(g)) x^{-1} = F(x) F(g) F(x)^{-1}$, so that $x^{-1} F(x) \in Z(\bG)$.  Since $\bG$ is of adjoint type, $Z(\bG) = 1$, and it follows that $x \in \bG^F$.  Thus ${^\sigma \psi} = \psi$ for every unipotent character $\psi$ of $\bG^F$ when $\sigma$ is an automorphism of $\bG$ defined over $F$ and $\bG$ is a simple algebraic group of adjoint type.  

Next assume that $\bG$ is a connected reductive group of adjoint type, so that $\bG$ is a direct product of simple algebraic groups of adjoint type, say $\bG = \prod_i \bH_i$, and suppose $\sigma$ is an automorphism of $\bG$ which is defined over $F$.  We also assume that no two simple factors of $\bG$ are isogenous, no simple factor $\bH_i$ is of type $D_{2n}$, none of type $C_2$ or $F_4$ if $p=2$, and none of type $G_2$ if $p=3$.  Since $\sigma$ must map simple factors of $\bG$ to other simple factors, and since no pair of simple factors is isogenous, each $\bH_i$ must be $\sigma$-stable.  Similarly, the assumption that no pair of simple factors of $\bG$ is isogenous implies that each simple factor $\bH_i$ is $F$-stable.  Now $\bG^F = \prod_i \bH_i^F$, and it follows that each $\bH_i^F$ is $\sigma$-stable, and so we may view $\sigma$ restricted to $\bH_i$ as an automorphism of $\bH_i$ defined over $F$.  Each unipotent character $\psi$ of $\bG^F$ is of the form $\prod_i \psi_i$ with $\psi_i$ a unipotent character of $\bH_i^F$ (by \cite[p.\ 28]{Lu78}, for example), and by the simple algebraic group case of adjoint type we have ${^\sigma \psi_i} = \psi_i$.  Since ${^\sigma \psi} = \prod_i {^\sigma \psi_i}$, we have ${^\sigma \psi} = \psi$.

Finally we consider the case that $\bG$ is a connected reductive group with no simple factor of type $D_{2n}$, none of type $C_2$ or $F_4$ if $p=2$, none of type $G_2$ if $p=3$, no isogenous pair of simple factors, and $\sigma \in \Out(\bG, F)$.  Consider the adjoint quotient map, which is an algebraic surjection $\phi: \bG \rightarrow \bG_{\ad}$ with $\mathrm{ker}(\phi) = Z(\bG)$.  See \cite[Section 1.5]{GeMa18} for the definition and properties of the adjoint quotient map.  Here $\bG_{\ad}$ is a group of adjoint type, and so is a direct product of simple algebraic groups of adjoint type, where these simple factors are the adjoint types of the simple factors of $\bG$, and so with corresponding factors isogenous.  Thus $\bG_{\ad}$ has no pair of simple factors which are isogenous, and no simple factor of type $D_{2n}$, or of type $C_2$ or $F_4$ if $p=2$, or of type $G_2$ if $p=3$, since this holds for $\bG$.  The adjoint quotient map $\phi$ also induces a Steinberg map on $\bG_{\ad}$, which we also call $F$, which commutes with $\phi$.  We may then define an automorphism $\tilde{\sigma}$ of $\bG_{\ad}$, where $\tilde{\sigma}(g_0) = \phi(\sigma(g))$, where $g \in \bG$ satisfies $\phi(g) = g_0$.  Note that this makes $\tilde{\sigma}$ well-defined since $Z(\bG) = \mathrm{ker}(\phi)$, and $\sigma(Z(\bG)) = Z(\bG)$.  It follows that $F$ commutes with $\tilde{\sigma}$ on $\bG_{\ad}$, that is, $\tilde{\sigma}$ is defined over $F$ on $\bG_{\ad}$.  From the previous case, if $\psi_0$ is a unipotent character of $\bG_{\ad}^F$, then $\,^{\tilde{\sigma}} \psi_0 = \psi_0$.

Now consider a unipotent character $\psi$ of $\bG^F$.  The adjoint quotient map $\phi$ induces a map from $\bG^F$ to $\bG_{\ad}^F$ which has image $G_1$ isomorphic to $\bG^F/Z(\bG^F)$.  By a result of Lusztig \cite[Proposition 3.15]{Lu78}, every unipotent character of $\bG^F$ is obtained by restricting a unipotent of $\bG_{\ad}^F$ to $G_1$ and factoring through the surjection from $\bG^F$.  That is, given unipotent $\psi$ of $\bG^F$, and $g \in \bG^F$, there exists a unipotent character $\psi_0$ of $\bG_{\ad}^F$ and an element $g_1$ of $G_1$ such that $\phi(g) = g_1$ and $\psi(g) = \psi_0(g_1)$.  Then $\,^\sigma \psi(g) = \psi(\sigma(g)) = \psi_0(\tilde{\sigma}(g_1)) = \,^{\tilde{\sigma}} \psi_0(g_1) = \psi_0(g_1)$, since $\,^{\tilde{\sigma}} \psi_0 = \psi_0$.  Since $\psi_0(g_1) = \psi(g)$, we now have $\, ^\sigma \psi = \psi$ as claimed.
\end{proof}

Our main application of Lemma \ref{GalAct} will be in combination with the following result from \cite[Theorem 4.1]{SV15}.

\begin{theorem} \label{RealJord} Suppose that $\bG$ is a connected reductive group with connected center and Frobenius map $F$, and $\chi \in \Irr(\bG^F)$ with Jordan decomposition $(s, \psi)$.  Then $\bar{\chi}$ has Jordan decomposition $(s^{-1}, \bar{\psi})$.  In particular, $\chi$ is real-valued if and only if $s$ is conjugate to $s^{-1}$ in $\bG^{*F^*}$, and if $h \in \bG^{*F^*}$ is such that $hsh^{-1} = s^{-1}$, then ${^h \psi} = \bar{\psi}$.
\end{theorem}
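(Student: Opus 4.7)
The plan is to transfer the assertion to the characterizing identity for the Jordan decomposition by applying complex conjugation. Two compatibility facts drive the argument. First, since $R_{\bT}^{\bG}$ is linear in its character argument and any character $\theta$ of the finite abelian group $\bT^F$ satisfies $\bar\theta = \theta^{-1}$, one has $\overline{R_{\bT}^{\bG}(\theta)} = R_{\bT}^{\bG}(\theta^{-1})$. Second, the duality bijection between $\bG^F$-classes of pairs $(\bT, \theta)$ and $\bG^{*F^*}$-classes of pairs $(\bT^*, s)$ is equivariant under inversion: if $(\bT, \theta)$ corresponds to $(\bT^*, s)$ then $(\bT, \theta^{-1})$ corresponds to $(\bT^*, s^{-1})$. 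Both facts are standard consequences of the explicit description of the duality at the level of cocharacter lattices.

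With these in hand, for $\chi \in \cE(\bG^F, (s))$ with $J_s(\chi) = \psi$, I would conjugate the defining relation
\[
\langle \chi, R_{\bT}^{\bG}(\theta) \rangle = \pm \langle \psi, R_{\bT^*}^{C_{\bG^*}(s)}(\mathbf{1}) \rangle.
\]
The virtual character $R_{\bT^*}^{C_{\bG^*}(s)}(\mathbf{1})$ is integer-valued, since its character formula pairs Green functions against the trivial character, so conjugation on the right only affects $\psi$, producing $\pm \langle \bar\psi, R_{\bT^*}^{C_{\bG^*}(s)}(\mathbf{1}) \rangle$. On the left it produces $\langle \bar\chi, R_{\bT}^{\bG}(\theta^{-1}) \rangle$, and by the two compatibility facts $(\bT, \theta^{-1})$ corresponds to $(\bT^*, s^{-1})$ while $C_{\bG^*}(s^{-1}) = C_{\bG^*}(s)$. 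The resulting identity is exactly the defining relation for $\bar\chi \in \cE(\bG^F, (s^{-1}))$ to have Jordan decomposition $\bar\psi$. Because $Z(\bG)$ is connected, Lusztig's theorem guarantees that the Jordan decomposition is uniquely pinned down by these inner-product equations, so $J_{s^{-1}}(\bar\chi) = \bar\psi$.

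For the ``in particular'' part, $\chi$ is real-valued iff $\chi = \bar\chi$, iff the $\bG^{*F^*}$-conjugacy classes of $(s, \psi)$ and $(s^{-1}, \bar\psi)$ coincide. Unwinding the definition of conjugacy on such pairs, this is exactly the statement that some $h \in \bG^{*F^*}$ satisfies $hsh^{-1} = s^{-1}$ and that under the resulting automorphism of $C_{\bG^*}(s)^{F^*} = h\,C_{\bG^*}(s)^{F^*}\,h^{-1}$ one has ${^h \psi} = \bar\psi$. The step I expect to be most delicate is checking that the sign $\pm$ in the defining identity is preserved when passing from $(s, \psi)$ to $(s^{-1}, \bar\psi)$, but this reduces to comparing the standard signs $\varepsilon_{\bG}$ and $\varepsilon_{C_{\bG^*}(s)}$ appearing in Lusztig's convention, which depend only on $F$-ranks and are unchanged since $C_{\bG^*}(s^{-1}) = C_{\bG^*}(s)$ and the tori involved on both sides are the same.
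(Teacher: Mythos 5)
The paper does not prove this statement itself; it quotes it verbatim from \cite[Theorem 4.1]{SV15}, so the comparison here is with the argument given there. Your overall strategy (conjugate the defining inner-product identity, using that $\overline{R_{\bT}^{\bG}(\theta)} = R_{\bT}^{\bG}(\theta^{-1})$, that duality intertwines $\theta \mapsto \theta^{-1}$ with $s \mapsto s^{-1}$, and that $R_{\bT^*}^{C_{\bG^*}(s)}(\mathbf{1})$ is integer-valued) is the right opening move, and it correctly shows that $\bar\chi$ lies in $\cE(\bG^F,(s^{-1}))$ and that $\bar\psi$ is \emph{consistent} with the inner-product constraints on $J_{s^{-1}}(\bar\chi)$.

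The gap is your final step: ``Lusztig's theorem guarantees that the Jordan decomposition is uniquely pinned down by these inner-product equations.'' It is not. The multiplicities $\langle \psi, R_{\bT^*}^{C_{\bG^*}(s)}(\mathbf{1})\rangle$ only see the uniform projection of $\psi$, and distinct unipotent characters can share a uniform projection. Worse, the failure occurs exactly where this theorem has content: if $\psi$ and $\bar\psi$ are a complex-conjugate pair of non-real unipotent characters (such as $E_6[\theta]$ and $E_6[\theta^2]$, which drive every computation in this paper), then $\langle \psi, R_{\bT^*}(\mathbf{1})\rangle = \langle \bar\psi, \overline{R_{\bT^*}(\mathbf{1})}\rangle = \langle \bar\psi, R_{\bT^*}(\mathbf{1})\rangle$ because $R_{\bT^*}(\mathbf{1})$ is integer-valued, so a bijection swapping $\psi$ and $\bar\psi$ satisfies all the same inner-product identities. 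Thus your argument cannot distinguish the conclusion $J_{s^{-1}}(\bar\chi) = \bar\psi$ from $J_{s^{-1}}(\bar\chi) = \psi$, which is precisely the dichotomy the theorem is meant to resolve. The proof in \cite{SV15} closes this by invoking the Digne--Michel uniqueness theorem for the Jordan decomposition (valid when $Z(\bG)$ is connected), which characterizes $J_s$ by a longer list of conditions --- compatibility with Lusztig/Harish-Chandra induction, with central characters, with Frobenius eigenvalues of cuspidal constituents, and with direct products and restriction of scalars --- and then checks that each of these conditions is preserved under the simultaneous operations $\chi \mapsto \bar\chi$, $s \mapsto s^{-1}$, $\psi \mapsto \bar\psi$. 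Your remark about the signs $\pm$ is correct but is a minor issue next to this one.
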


We may now make the following observation, which is crucial in the proof of our main results in the next section.

\begin{lemma} \label{RealLemma} Suppose $\bG$ is a connected reductive group with connected center and Frobenius map $F$, and $\chi \in \Irr(\bG^F)$ with Jordan decomposition $(s, \psi)$.  Suppose that $s$ is a real semisimple element of $\bG^{*F^*}$ with centralizer $C_{\bG^*}(s)$ which has no pair of simple factors which are isogenous, no simple factor of type $D_{2n}$, none of type $C_2$ or $F_4$ if $p=2$, and none of type $G_2$ if $p=3$.  Then $\chi$ is real-valued if and only if $\psi$ is real-valued.
\end{lemma}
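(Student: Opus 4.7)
The plan is to combine Theorem \ref{RealJord}, which reformulates the real-valuedness of $\chi$ in terms of its Jordan data, with Lemma \ref{GalAct}, which will force the relevant conjugation action on $\psi$ to be trivial. Since $Z(\bG)$ is connected, Steinberg's theorem guarantees that $C_{\bG^*}(s)$ is a connected reductive group (stable under $F^*$, which restricts to a Steinberg map on it), so we may cleanly speak of its unipotent characters and apply the Jordan decomposition framework from Section \ref{JDSection}.

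The first step is to unwind Theorem \ref{RealJord}: $\bar\chi$ has Jordan decomposition $(s^{-1}, \bar\psi)$, so $\chi$ is real-valued precisely when $(s, \psi)$ and $(s^{-1}, \bar\psi)$ lie in the same $\bG^{*F^*}$-conjugacy class of pairs. Using the hypothesis that $s$ is real, fix $h \in \bG^{*F^*}$ with $hsh^{-1} = s^{-1}$; the pair condition then reduces to the single requirement $^h\psi = \bar\psi$.

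Next I would check that conjugation by $h$ defines an automorphism $\sigma_h$ of the algebraic group $C_{\bG^*}(s)$ which is defined over $F^*$. Normalization is automatic since $C_{\bG^*}(s^{-1}) = C_{\bG^*}(s)$, and commutation with $F^*$ follows from $F^*(h) = h$ by the short calculation $F^*(hgh^{-1}) = hF^*(g)h^{-1}$. By hypothesis $C_{\bG^*}(s)$ satisfies exactly the conditions imposed in Lemma \ref{GalAct} on isogeny of simple factors and on the excluded types $D_{2n}$, $C_2$/$F_4$ (in characteristic $2$), and $G_2$ (in characteristic $3$), so Lemma \ref{GalAct} applies to $\sigma_h$ and yields $^h\psi = \psi$ for every unipotent character $\psi$ of $C_{\bG^*}(s)^{F^*}$.

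Substituting $\psi$ for $^h\psi$ reduces $^h\psi = \bar\psi$ to $\psi = \bar\psi$, giving both directions of the claimed equivalence at once. The main subtlety lies in the third step: one must ensure Lemma \ref{GalAct} applies regardless of whether $\sigma_h$ is outer or inner on $C_{\bG^*}(s)$ (the latter happens, for instance, when $s$ is an involution and one may take $h = 1$), but the proof of Lemma \ref{GalAct} treats both cases uniformly through the reduction to the adjoint quotient, so no extra argument is needed. Beyond this, the only other point deserving care is the standard but essential input that connectedness of $Z(\bG)$ forces $C_{\bG^*}(s)$ to be connected, which is what makes the Jordan decomposition bijection and the application of Lemma \ref{GalAct} both unambiguous.
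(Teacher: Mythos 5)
Your proposal is correct and follows essentially the same route as the paper: fix $h \in \bG^{*F^*}$ inverting $s$, note that $\sigma_h$ is an automorphism of the connected group $C_{\bG^*}(s)$ defined over $F^*$, apply Lemma \ref{GalAct} to get ${^h\psi} = \psi$, and conclude via Theorem \ref{RealJord}. The only cosmetic difference is in handling the case where $\sigma_h$ could be inner (e.g.\ $s$ an involution): the paper disposes of it with a separate parenthetical remark, while you observe that the proof of Lemma \ref{GalAct} already covers inner automorphisms defined over the Steinberg map; both resolutions are fine.
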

\begin{proof} We have $s \in \bG^{*F^*}$ is a semisimple element which is real in $\bG^{*F^*}$, so let $h \in \bG^{*F^*}$ be such that $hsh^{-1} = s^{-1}$.  Since we assume that the center of $\bG$ is connected, $C_{\bG^*}(s)$ is a connected reductive group.  We may then define an automorphism $\sigma_h$ on the connected reductive group $C_{\bG^*}(s)$ by $\sigma_h(a) = hah^{-1}$, and if $s$ is not an involution, then $\sigma_h$ is an outer automorphism.  Since $h \in \bG^{*F^*}$, $\sigma_h$ commutes with $F^*$, that is, $\sigma_h \in \Out(C_{\bG^*}(s), F^*)$.  In particular, if $C_{\bG^*}(s)$ has no two simple factors which are isogenous,  no simple factor of type $D_{2n}$, none of type $C_2$ or $F_4$ if $p=2$, and none of type $G_2$ if $p=3$, then Lemma \ref{GalAct} applies to the automorphism $\sigma_h$ and so $\psi$ is invariant under $\sigma_h$.  In this situation (or if $s$ is an involution) it follows from Theorem \ref{RealJord} and Lemma \ref{GalAct} that $\chi$ is real-valued if and only if $\psi$ is real-valued.
\end{proof}

\section{The groups $E_7(q)_{\ad}$ and $E_8(q)$} \label{E7E8Section}

In this section we give our main results for the exceptional groups $E_7(q)_{\ad}$ and $E_8(q)$.  We begin by noticing that since $-1$ is in the Weyl group of type $E_7$ and type $E_8$, by a result of Singh and Thakur \cite[Theorem 2.3.1]{SiTh08} every semisimple element $s$ in each of the groups $\bG^F = E_7(q)_{\ad}, E_7(q)_{\msc},$ or $E_8(q)$ is real in $\bG^F$.  It follows from Theorem \ref{RealJord} that for every irreducible character $\chi$ of $\bG^F$ the characters $\chi$ and $\bar{\chi}$ are in the same Lusztig series $\cE(\bG^F, (s))$.

The notation for the relevant unipotent characters of the groups in the results of this section is slightly adjusted from that of \cite[pp.\ 483--488]{Ca85} for the sake of clarity.  For example, instead of using the notation $E_6[\theta^2], \epsilon$ for the unipotent character in the last line of \cite[p.~483]{Ca85}, we will write $E_6[\theta^2, \epsilon]$.  The structure of centralizers of semisimple elements in these results is given as a product of the simple factor types along with any central torus factor which occurs, where a polynomial in $q$ denotes a torus of that order.  We now give our results for the group $E_7(q)_{\ad}$.

\begin{theorem} \label{E7} Let $q$ be a prime power.  The number of irreducible characters of the group $\bG^F =E_7(q)_{\ad}$ which are not real-valued is $2q+4$ if $q$ is even, and $2q+8$ if $q$ is odd.  These characters which are not real-valued are:
\begin{itemize}
\item The unipotent characters $E_7[\xi]$, $E_7[-\xi]$, $E_6[\theta, 1]$, $E_6[\theta^2, 1]$, $E_6[\theta, \epsilon]$, and $E_6[\theta^2, \epsilon]$;
\item When $q$ is odd, the six unipotent characters above tensored with the linear character of order $2$ of $\bG^F$;
\item A conjugate pair of characters in each of $\frac{1}{2} (q-2)$  (if $q$ is even) or $\frac{1}{2}(q-3)$ (if $q$ is odd) Lusztig series $\cE(\bG^F, s)$, where $(s)$ is a semisimple class in $\bG^{*F^*}$ such that $C_{\bG^*}(s)^{F^*}$ is of type $E_6(q).(q-1)$;
\item A conjugate pair of characters in each of $\frac{1}{2} q$  (if $q$ is even) or $\frac{1}{2}(q-1)$ (if $q$ is odd) Lusztig series $\cE(\bG^F, s)$, where $(s)$ is a semisimple class in $\bG^{*F^*}$ such that $C_{\bG^*}(s)^{F^*}$ is of type ${^2E_6(q)}.(q+1)$.
\end{itemize}
Moreover, every real-valued irreducible characters of $\bG^F$ has Frobenius--Schur indicator $1$, that is, $m_{\R}(\chi) = 1$ for all $\chi \in \Irr(\bG^F)$.
\end{theorem}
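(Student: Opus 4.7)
The plan is to follow the template of Theorems \ref{F4} and \ref{2F4}: compute the number of involutions in $\bG^F = E_7(q)_{\ad}$, identify every non-real-valued irreducible character (and any that could contribute $-1$ to \eqref{FScount}), and then verify via a degree-sum comparison that all real-valued characters must have indicator $+1$. The involution count in $E_7(q)_{\ad}$ is available for $q$ odd from \cite{Iw70} and for $q$ even from \cite{AsSe76}, while the grand total of character degrees comes from L\"ubeck's tables \cite{LuWWW, LuWWW1} and is recorded in Table \ref{SumTable}.

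Since $-1 \in W(E_7)$, every semisimple $s \in \bG^{*F^*}$ is real (Singh--Thakur, as cited at the start of the section), so by Theorem \ref{RealJord} each pair $\{\chi, \bar\chi\}$ lies in a single rational Lusztig series $\cE(\bG^F, (s))$. To handle such a series, I invoke Lemma \ref{RealLemma}: whenever $C_{\bG^*}(s)$ meets the hypotheses of Lemma \ref{GalAct}, a character with Jordan decomposition $(s, \psi)$ is real-valued iff $\psi$ is. Geck's work \cite{Ge03, Ge04, Ge05} then tells us, for each centralizer type arising, precisely which unipotent characters are non-real. The exceptional behaviour therefore concentrates on two places: (a) the unipotent series $(s)=(1)$, which contains the six non-real unipotent characters of $E_7(q)_{\ad}$ listed in the statement, and (b) Lusztig series whose centralizer has a simple factor of type $E_6$ or ${}^2 E_6$, in which the non-real unipotent characters $E_6[\theta]$, $E_6[\theta^2]$ (or their ${}^2 E_6$ analogues) produce conjugate pairs of non-real characters of $\bG^F$.

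I would then use L\"ubeck's classification of semisimple class types in $\bG^{*F^*}$ to enumerate the classes with centralizer of type $E_6(q).(q-1)$ and of type ${}^2 E_6(q).(q+1)$. Each such class contributes exactly one conjugate pair of non-real characters in its Lusztig series. The parameters are read from the central torus of order $q\mp 1$, cut down by the Weyl-group action and by the identification $s\sim s^{-1}$, and yield the stated $\tfrac12(q-2)$ or $\tfrac12(q-3)$ classes of the first type and $\tfrac12 q$ or $\tfrac12(q-1)$ of the second, according to the parity of $q$. For $q$ odd, tensoring the six non-real unipotent characters in (a) by the order-$2$ linear character of $\bG^F$ (which exists because $[E_7(q)_{\ad}:E_7(q)]=2$) produces the additional six non-real characters listed. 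Finally, subtracting the degrees of all these non-real characters from the total in Table \ref{SumTable} and checking equality with the involution count forces, by the remark following \eqref{FScount}, every remaining real-valued character to have indicator $+1$, so a posteriori no character has indicator $-1$ and $m_{\R}(\chi)=1$ for all $\chi$.

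The main obstacle is twofold. First, several semisimple centralizers in $E_7$ fall outside the hypotheses of Lemma \ref{GalAct}: those with a simple factor of type $D_{2n}$ (for instance $D_6 A_1$ or $D_4 T_3$) and those with isogenous pairs of simple factors (for instance $A_3 A_3 A_1$ or $A_1^k$). For these, Lemma \ref{RealLemma} cannot be applied directly, and one must argue by hand that the particular automorphism $\sigma_h$ induced by a real-conjugator $h$ still fixes the unipotent character $\psi$, either by checking that $\sigma_h$ is inner on $C_{\bG^*}(s)^{F^*}$ or by appealing to Geck's explicit fields-of-values data for the relevant $\psi$. Second, turning the structural dichotomy between $E_6$- and ${}^2E_6$-type centralizers into the precise integers $\tfrac12(q-2)$, $\tfrac12(q-3)$, $\tfrac12 q$, $\tfrac12(q-1)$ requires a careful inventory of L\"ubeck's semisimple class types modulo $s\mapsto s^{-1}$, and this count must match exactly the arithmetic difference between the degree sum and the involution count; this bookkeeping step is the delicate one on which the whole argument balances.
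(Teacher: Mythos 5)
Your proposal is correct and follows essentially the same route as the paper: identify the non-real characters via Geck's results and Lemma \ref{RealLemma}, count the relevant semisimple classes, and close with the degree-sum versus involution-count comparison. One clarification: the first half of your ``main obstacle'' is a phantom difficulty. Lemma \ref{RealLemma} is only needed in the negative direction, to certify that the listed characters are \emph{not} real-valued, and the centralizers involved there ($E_6(q).(q-1)$ and ${^2E_6}(q).(q+1)$) do satisfy the hypotheses of Lemma \ref{GalAct}; for every other Lusztig series (including those whose centralizers have $D_{2n}$ factors or isogenous pairs of simple factors) no case-by-case analysis of $\sigma_h$ is required, because once the degree sum of the complement of your list equals the number of involutions, the inequality $\ep(\chi)\le 1$ in \eqref{FScount} forces $\ep(\chi)=1$ --- and in particular real-valuedness --- for every character not on the list.
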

\begin{proof} We first explain why the list of characters are not real-valued.  It follows from \cite[Table 1]{Ge03} that the cuspidal unipotent characters $E_7[\xi]$ and $E_7[-\xi]$ of $E_7(q)_{\ad}$ (or of $E_7(q)_{\mathrm{sc}}$) are not real-valued for any $q$.  The unipotent characters $E_6[\theta, 1]$, $E_6[\theta, \epsilon]$, $E_6[\theta^2, 1]$, and $E_6[\theta^2, \epsilon]$ are not cuspidal, and are constituents of a Harish-Chandra series corresponding to a Levi component which is type $E_6(q)$, parabolically induced from the cuspidal unipotent character $E_6[\theta]$ of $E_6(q)$ for the first two cases, and from the cuspidal unipotent character $E_6[\theta^2]$ in the last two cases.  The cuspidal unipotent characters $E_6[\theta]$ and $E_6[\theta^2]$ are not real-valued again by \cite[Table 1]{Ge03}.  It follows from \cite[Proposition 5.6]{Ge03} that the fields of character values of $E_6[\theta, 1]$ and $E_6[\theta, \epsilon]$ are the same as that of $E_6[\theta]$, and the fields of character values of $E_6[\theta^2, 1]$ and $E_6[\theta^2, \epsilon]$ are the same as that of $E_6[\theta^2]$, and so these unipotent characters of $E_7(q)_{\ad}$ are not real-valued.  When $q$ is odd, the derived subgroup of $\bG^F$ is the simple group $E_7(q)$, and is an index $2$ subgroup of $\bG^F$.  Thus $\bG^F$ has a linear character $\lambda$ of order $2$ when $q$ is odd. Tensoring the six unipotent characters which are not real-valued of $\bG^F$ with $\lambda$ produces six distinct irreducible characters which are not unipotent by \cite[Proposition  13.30(ii)]{dmbook}, and does not change the field of values. Thus these six characters are also not real-valued when $q$ is odd.

Next, it follows from \cite{De83, FlJa93} that the group $\bG^{*F^*} = E_7(q)_{\mathrm{sc}}$ has $\frac{1}{2} (q-2)$ (if $q$ is even) or $\frac{1}{2}(q-3)$ (if $q$ is odd) semisimple classes $(s)$ such that $C_{\bG^*}(s)^{F^*}$ is of type $E_6(q).(q-1)$, and has $\frac{1}{2} q$ (if $q$ is even) or $\frac{1}{2} (q-1)$ (if $q$ is odd) semisimple classes $(s)$ such that $C_{\bG^*}(s)^{F^*}$ is of type ${^2E_6(q)}.(q+1)$.  Since the central torus factor of these centralizers (cyclic of order $q \pm 1$) has only the trivial character as a unipotent character, then as described in the proof of Lemma \ref{GalAct} above, the centralizer of type $E_6(q).(q-1)$ has unipotent characters with the same character values as the cuspidal unipotent characters $E_6[\theta]$ and $E_6[\theta^2]$ of the $E_6(q)$ factor.  These cuspidal unipotent characters of $E_6(q)$ are not real-valued by \cite[Table 1]{Ge03}, and these are in fact complex conjugates.  Since the semisimple classes $(s)$ are all real, it follows from Lemma \ref{RealLemma} that the characters of $E_7(q)_{\ad}$ with Jordan decomposition $(s, E_6[\theta])$ or $(s, E_6[\theta^2])$ are not real-valued, and that these are conjugate pairs of characters of $E_7(q)_{\ad}$ by Theorem \ref{RealJord}, and there exists one such pair for each corresponding semisimple class $(s)$ in $\bG^{*F^*}$.  By the same argument, the centralizers of the semisimple classes $(s)$ in $\bG^{*F^*}$ of type ${^2E_6(q)}.(q+1)$ have unipotent characters with the same character values as the cuspidal unipotent characters ${^2E_6}[\theta]$ and ${^2E_6}[\theta^2]$ of ${^2E_6}(q)$, which are not real-valued by \cite[Table 1]{Ge03} and which are conjugate pairs.  Again by Lemma \ref{RealLemma} and Theorem \ref{RealJord}, the characters of $E_7(q)_{\ad}$ with Jordan decomposition $(s, {^2E_6}[\theta])$ and $(s, {^2E_6}[\theta^2])$ are not real-valued and are conjugate pairs, as $(s)$ varies over these semisimple classes of $\bG^{*F^*}$. This shows that the characters listed are all not real-valued.

We now prove that the remaining irreducible characters of $E_7(q)_{\ad}$ are real-valued, and they all have Frobenius--Schur indicator $1$, using the method employed in Sections \ref{ExamplesSection} and \ref{F4Section}.  The number of involutions in $E_7(q)_{\ad}$ may be computed in the case that $q$ is even using \cite{AsSe76}, and in the case that $q$ is odd using any of \cite{Iw70, De83, FlJa93}, and the classification of involutions is also nicely summarized in \cite{BuTh18}.  The number of involutions obtained as a polynomial in $q$ is given in Table \ref{InvolTable}.  The total character degree sum for $E_7(q)_{\ad}$ may be computed directly using the data of L\"{u}beck \cite{LuWWW}, and the result as a polynomial in $q$ is given in Table \ref{SumTable}.  The degrees of the characters of $E_7(q)_{\ad}$ which are not real-valued may be obtained as follows.  The degrees of the unipotent characters (and their tensors with the linear character of order 2) are given in \cite[p.~483]{Ca85} or in \cite{Lu84}.  As in Section \ref{JDSection}, the degree of any character with Jordan decomposition $(s, \psi)$ is $[\bG^{*F^*}: C_{\bG^{*}}(s)^{F^*}]_{p'} \psi(1)$.  The degrees $\psi(1)$ of the relevant unipotent characters of $E_6(q)$ and ${^2 E_6}(q)$ are given in \cite[pp.\ 480-481]{Ca85}.  The resulting degrees of the characters of $E_7(q)_{\ad}$ which are not real-valued are given in Table \ref{E7Table}.  Summing these degrees with the number of involutions, whether $q$ is even or odd, we obtain the total character degree sum, which implies the result by the Frobenius--Schur involution count.  \end{proof}

The following is our result for the group $E_8(q)$.  The strategy used is that in the proof of Theorem \ref{E7}, although the list of characters which are not real-valued is significantly longer for $E_8(q)$, and so the description of these characters is given in the proof of Theorem \ref{E8} and in Table \ref{E8Table}.

\begin{theorem} \label{E8} Let $q$ be a prime power.  The number of irreducible characters of the group $\bG^F = E_8(q)$ which are not real-valued is $2q^2 + 6q + 20$ if $q$ is a power of $2$, $2q^2 + 6q + 22$ if $q$ is a power of $3$, and $2q^2 + 6q+24$ otherwise.  The description of these characters in terms of Jordan decomposition is given below and in Table \ref{E8Table}.  In particular, every real-valued irreducible character of $\bG^F$ has Frobenius--Schur indicator $1$, that is, $m_{\R}(\chi) = 1$ for all $\chi \in \Irr(\bG^F)$.
\end{theorem}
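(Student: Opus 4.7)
The plan is to follow the strategy used in the proof of Theorem \ref{E7}, but with substantially more bookkeeping, since many more semisimple classes of the self-dual group $\bG^{*F^*} = E_8(q)$ contribute non-real characters. I would begin by invoking the fact that $-1$ lies in $W(E_8)$, so by Singh--Thakur \cite[Theorem 2.3.1]{SiTh08} every semisimple element of $\bG^{*F^*}$ is real in $\bG^{*F^*}$; combined with Theorem \ref{RealJord}, this means $\chi$ and $\bar{\chi}$ always lie in the same Lusztig series $\cE(\bG^F, (s))$, so realness of $\chi$ reduces to a question about the unipotent part $\psi$ inside each series, with Lemma \ref{RealLemma} providing the main criterion.

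First, I would enumerate the non-real unipotent characters of $E_8(q)$ from \cite[Table 1]{Ge03}: the cuspidal unipotents $E_8[\theta]$, $E_8[\theta^2]$, $E_8[i]$, $E_8[-i]$, and $E_8[\zeta^j]$ for $j=1,2,3,4$ with $\zeta$ a primitive fifth root of unity, together with the Harish--Chandra series attached to the cuspidal $E_7[\pm\xi]$ of an $E_7$-Levi and the cuspidal $E_6[\theta^{\pm 1}]$ of an $E_6$-Levi. Just as in the $E_7$ proof, \cite[Proposition 5.6]{Ge03} transports the non-real field of character values from each cuspidal source to every constituent of its Harish--Chandra series.

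Second, I would enumerate the semisimple classes $(s)$ in $\bG^{*F^*}$ whose centralizer $C_{\bG^*}(s)^{F^*}$ has a simple factor admitting a non-real unipotent character, namely a factor of type $E_6$, ${}^2E_6$, $E_7$, $F_4$, or $G_2$, using the semisimple conjugacy class data of \cite{De83, FlJa93} together with L\"ubeck's tables \cite{LuWWW1}. For each such class I would verify the hypotheses of Lemma \ref{RealLemma} (no two isogenous simple factors, no $D_{2n}$ factor, no $C_2$/$F_4$ factor when $p=2$, no $G_2$ factor when $p=3$), then apply the lemma to identify the non-real $\chi$ as exactly those with Jordan decomposition $(s, \psi)$ for $\psi$ one of the non-real unipotents of the centralizer, with Theorem \ref{RealJord} pairing them into conjugate pairs. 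The $2q^2$ leading term should come from centralizers whose semisimple factor is $E_6(q)$ or ${}^2E_6(q)$ together with a two-dimensional central torus; the $6q$ term from centralizers with factors of type $E_7(q)$, ${}^2E_7(q)$, or $E_6$/${}^2E_6$ times a one-dimensional torus; and the constant term from isolated classes with centralizers like $E_7 \times A_1$ or $E_6 \times A_2$. The three-way split $+20$, $+22$, $+24$ in the final count will come from centralizers that contain an $F_4$ factor when $p=2$ or a $G_2$ factor when $p=3$: in those characteristics Lemma \ref{RealLemma} does not apply, so one must handle those Lusztig series directly by analyzing the action of the conjugating element on the relevant non-real cuspidal unipotent using Geck's results for $F_4$ and for $G_2$.

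Finally, I would close the Frobenius--Schur indicator count as in Theorem \ref{E7}: compute the number of involutions in $E_8(q)$ as a polynomial in $q$ using \cite{AsSe76} in even characteristic and \cite{Iw70, De83, FlJa93, BuTh18} in odd characteristic, compute the total sum of irreducible character degrees using L\"ubeck's data \cite{LuWWW}, and compute each non-real degree via $\chi(1) = [\bG^{*F^*}: C_{\bG^*}(s)^{F^*}]_{p'}\psi(1)$ together with the unipotent degrees of $E_6$, ${}^2E_6$, and $E_7$ tabulated in \cite[pp.\ 480--483]{Ca85} and in \cite{Lu84}. If the total character-degree sum minus the sum of all listed non-real degrees equals the involution count, then \eqref{FScount} forces $m_{\R}(\chi)=1$ for every real-valued $\chi \in \Irr(\bG^F)$. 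The main obstacle will be the careful, complete enumeration of the semisimple classes of $E_8(q)$ whose centralizers admit non-real unipotent constituents: since the polynomials must match exactly, missing even a single centralizer type would invalidate the count, and the small-characteristic $F_4$- and $G_2$-factor exceptions require separate direct verification rather than an appeal to Lemma \ref{RealLemma}.
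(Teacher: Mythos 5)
Your overall strategy is the paper's: reduce to Jordan decomposition via Singh--Thakur and Theorem \ref{RealJord}, identify the non-real unipotent characters of the centralizers from Geck's table, apply Lemma \ref{RealLemma}, and close with the Frobenius--Schur involution count against L\"ubeck's character degree sums. But your explanation of the three-way split $+20/+22/+24$ is wrong, and it points to a real misunderstanding of where the case analysis lives. You propose that the split comes from centralizers containing an $F_4$ factor when $p=2$ or a $G_2$ factor when $p=3$, which would fall outside Lemma \ref{RealLemma} and require separate treatment. No such centralizers exist: $E_8$ is simply laced, so every simple factor of the centralizer of a semisimple element is of simply-laced type (possibly twisted), and the relevant centralizer types are built only from $E_7$, $E_6$, ${^2E_6}$, $A_1$, $A_2$, ${^2A_2}$, and tori. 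The actual source of the dependence on $q$ is arithmetic: the \emph{number} of semisimple classes of each centralizer type depends on $q \bmod 6$. For instance, the isolated class with centralizer $E_7(q).A_1(q)$ exists only for $q$ odd, the class with centralizer $E_6(q).A_2(q)$ only for $q\equiv 1 \bmod 3$, and the class with centralizer ${^2E_6}(q).{^2A_2}(q)$ only for $q\equiv 2\bmod 3$; these, together with the varying counts of the torus-centralizer families, produce the constants $20$, $22$, $24$. Since $q$ a power of $2$ satisfies $q\equiv 2$ or $4 \bmod 6$, powers of $2$ are not even a single residue class, so the split you describe could not be organized the way you propose.

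Two further points that would break your closing count. First, your list of non-real cuspidal unipotent characters of $E_8(q)$ has eight entries; Geck's table gives ten, including $E_8[-\theta]$ and $E_8[-\theta^2]$, and omitting two degrees makes the degree-sum identity fail. Second, for the centralizers with an $A_1(q)$, $A_2(q)$, or ${^2A_2}(q)$ factor you must count not just the non-real cuspidal unipotent of the $E_6/{^2E_6}/E_7$ factor but also its tensor products with each unipotent character of the $A$-type factor (two for $A_1$, three for $A_2$ and ${^2A_2}$), each of which gives a distinct non-real character of $E_8(q)$; your proposal does not account for this multiplicity. With those corrections the enumeration matches Table \ref{E8Table} and the argument closes as you describe.
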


\begin{proof}
First, it follows from \cite[Table 1]{Ge03} that the ten cuspidal unipotent characters $E_8[\theta]$, $E_8[-\theta]$, $E_8[\theta^2]$, $E_8[-\theta^2]$, $E_8[\zeta]$, $E_8[\zeta^2]$, $E_8[\zeta^3]$, $E_8[\zeta^4]$, $E_8[i]$, and $E_8[-i]$ of $E_8(q)$ are not real-valued for any $q$. The unipotent characters $E_7[\xi, 1]$, $E_7[-\xi,1]$, $E_7[\xi, \epsilon]$, $E_7[-\xi, \epsilon]$, $E_6[\theta, \phi_{1,0}]$, $E_6[\theta^2, \phi_{1,0}]$, $E_6[\theta, \phi_{1,3}']$, $E_6[\theta^2, \phi_{1,3}']$, $E_6[\theta, \phi_{1,3}'']$, $E_6[\theta^2, \phi_{1,3}'']$, $E_6[\theta, \phi_{1,6}]$, $E_6[\theta^2, \phi_{1,6}]$, $E_6[\theta, \phi_{2,1}]$, $E_6[\theta^2, \phi_{2,1}]$, $E_6[\theta, \phi_{2,2}]$, and $E_6[\theta^2, \phi_{2,2}]$ are Harish-Chandra induced from the non-real cuspidal unipotent characters $E_7[\xi]$, $E_7[-\xi]$, $E_6[\theta]$, and $E_6[\theta^2]$ of the Levi components of types $E_7(q)$ and $E_6(q)$.  As argued in the proof of Theorem \ref{E7}, it follows from \cite[Proposition 5.6]{Ge03} that these are not real-valued.

From \cite{De83, FlJa94}, we see that $\bG^{*F^*} = E_8(q)$ has semisimple classes $(s)$ such that $C_{\bG^*}(s)^{F^*}$ is one of the following types: $E_7(q).(q- 1)$, $E_7(q).(q+1)$, $E_6(q).(q-1)^2$, $E_6(q).(q^2-1),$ $E_6(q).(q^2+q+1)$, ${^2 E_6}(q).(q+1)^2$, ${^2 E_6}(q).(q^2-1)$, or ${^2 E_6}(q).(q^2-q+1)$. The number of classes of each centralizer type depends on the residue class of $q$ modulo 6, and is provided in \cite{De83, FlJa94} or in L\"ubeck's data on centralizers of semisimple elements \cite{LuWWW1}, but can be inferred by dividing entries from the last five columns of Table \ref{E8Table} by 2. In each of these cases, the central torus factor has only the trivial character as a unipotent character, and therefore, as in the proof of Lemma \ref{GalAct}, $C_{\bG^*}(s)^{F^*}$ has unipotent characters that have the same character values as the cuspidal unipotent characters $E_7[\xi]$ and $E_7[-\xi]$, $E_6[\theta]$ and $E_6[\theta^2]$, or ${^2 E_6}[\theta]$ and ${^2 E_6}[\theta^2]$ of the $E_7(q),$ $E_6(q)$, or ${^2 E_6}(q)$ factors. By \cite[Table 1]{Ge03}, these cuspidal unipotents are not real-valued, and since the semisimple classes $(s)$ are all real classes, it follows from Lemma \ref{RealLemma} that the characters of $E_8(q)$ with Jordan decomposition $(s, E_7[\xi])$ or $(s, E_7[-\xi])$, $(s, E_6[\theta^2])$ or $(s, E_6[\theta])$, and $(s,{^2 E_6}[\theta])$ or $(s, {^2 E_6}[\theta^2])$ are not real-valued, and are complex conjugate pairs (respectively) as $(s)$ varies over these semisimple classes of $\bG^{*F^*}$. 

Next, $\bG^{*F^*}$ has semisimple classes $(s)$ such that $C_{\bG^*}(s)^{F^*}$ is one of the following types: $E_7(q).A_1(q)$, $E_6(q).A_1(q).(q-1)$, or ${^2 E_6}(q).A_1(q).(q+1)$. As before, the number of classes of each centralizer type can be obtained by dividing entries of the last five columns of Table \ref{E8Table} by 2. The central torus factor has only the trivial character as a unipotent character, and in addition to the trivial character, the $A_1(q)$ factor has a unipotent character $\nu$ of degree $q$ (see \cite[p.~465]{Ca85}). Therefore, as in the proof of Lemma \ref{GalAct}, $C_{\bG^*}(s)^{F^*}$ has unipotent characters that have the same character values as $E_7[\xi]$, $E_7[-\xi]$, $E_7[\xi]\otimes \nu$, and $E_7[-\xi]\otimes \nu$, $E_6[\theta]$, $E_6[\theta^2]$, $E_6[\theta]\otimes \nu$, and $E_6[\theta^2]\otimes\nu$, or ${^2 E_6}[\theta]$, ${^2 E_6}[\theta^2]$, ${^2 E_6}[\theta]\otimes\nu$, and ${^2 E_6}[\theta^2]\otimes\nu$, none of which is real-valued.  It follows from Lemma \ref{RealLemma} that the characters of $E_8(q)$ with Jordan decomposition $(s, E_7[\xi])$, $(s, E_7[-\xi])$, $(s, E_7[\xi]\otimes \nu)$, or $(s, E_7[-\xi]\otimes \nu)$, $(s, E_6[\theta])$, $(s, E_6[\theta])$, $(s, E_6[\theta]\otimes \nu)$, or $(s, E_6[\theta]\otimes \nu)$, or $(s,{^2 E_6}[\theta])$, $(s, {^2 E_6}[\theta^2])$, $(s,{^2 E_6}[\theta]\otimes \nu)$, or $(s, {^2 E_6}[\theta^2]\otimes \nu)$ are not real-valued, and are complex conjugate pairs as $(s)$ varies over these semisimple classes of $\bG^{*F^*}$.

Finally, when $q\equiv 1,4 \bmod 6$, $\bG^{*F^*}$ has a semisimple class $(s)$ such that $C_{\bG^*}(s)^{F^*}$ is of type $E_6(q).A_2(q)$, and when $q\equiv 2,5 \bmod 6$, $\bG^{*F^*}$ has a semisimple class $(s)$ such that $C_{\bG^*}(s)^{F^*}$ is of type ${^2 E_6}(q).{^2 A_2}(q)$. The $A_2(q)$ factor has unipotent characters of degree $1$, $q(q+1)$, and $q^3$, and the ${^2 A_2}(q)$ factor has unipotent characters of degree $1$, $q(q-1)$, and $q^3$ (see \cite[p.~465]{Ca85}). Therefore, $C_{\bG^*}(s)^{F^*}$ has unipotent characters that have the same character values as the cuspidal unipotent characters $E_6[\theta]$ and $E_6[\theta^2]$, or ${^2 E_6}[\theta]$ and ${^2 E_6}[\theta^2]$, as well as the tensor product of these characters with the other unipotent characters of $A_2(q)$ or ${^2 A_2}(q)$, each of which are characters which are not real-valued. As above, the resulting six characters of $E_8(q)$ (provided $q\not\equiv 3\bmod 6$) are not real-valued.

To complete the proof, we observe that the sum of the degrees of the aforementioned characters that are not real-valued, given in the fourth column of Table \ref{E8Table}, together with the number of involutions provided in Table \ref{InvolTable}, is equal to the total sum of the degrees of all irreducible characters of $E_8(q)$, which is given in Table \ref{SumTable}.  From the Frobenius--Schur involution count, it follows that $\ep(\chi)=1$ for all real-valued irreducible characters $\chi$ of $E_8(q)$.  The total number of characters which are not real-valued is obtained by summing the entries in the last five columns of Table \ref{E8Table}.
\end{proof}

\section{Remarks on remaining cases} \label{RemarksSec}

We expect that a statement similar to Theorem \ref{E7} holds for the simple group $E_7(q)$.  If $E_7(q)_{\msc}$ is distinct from $E_7(q)_{\ad}$ (when $q$ is odd), then for $\bG^F = E_7(q)_{\msc}$, the group $\bG$ has disconnected center of order 2, and $E_7(q)$ is the quotient of $E_7(q)_{\msc}$ by its center.  Then the results in Theorem \ref{RealJord} and Lemma \ref{RealLemma} are not known to hold in this case, which is one obstruction.  As mentioned in the Remark at the end of \cite{SV15}, we also expect that Theorem \ref{RealJord} holds more generally than when $Z(\bG)$ is connected, namely it should hold for Lusztig series in cases when $C_{\bG^*}(s)$ is connected (while $Z(\bG)$ is not necessarily connected).  However, it seems that also $E_7(q)_{\msc}$ has characters which are not real-valued in Lusztig series in cases when $C_{\bG^*}(s)$ is disconnected with two components, but more information on the action on Jordan decomposition in this case is needed.  It appears from numerical investigation that for $E_7(q)_{\msc}$ with $q$ odd that a real-valued character has Frobenius--Schur indicator $1$ if and only if its central character is trivial.  To prove this, one needs to show that the sum of the degrees of the real-valued irreducible characters of $E_7(q)_{\msc}$ is equal to the number of elements which square to the non-trivial central element.  Once this result is proved, it will follow for the simple group $E_7(q)$ that every real-valued irreducible character of $E_7(q)$ will have Frobenius--Schur indicator 1, since $E_7(q)$ is a quotient of $E_7(q)_{\msc}$ by its center.  We note that our results for $E_7(q)_{\ad}$ are not enough to draw these conclusions for the index 2 simple group $E_7(q)$, since we do not have enough information to rule out the possibility that an irreducible character of $E_7(q)_{\ad}$ which is not real-valued could restrict to $E_7(q)$ to give a character with a component with Frobenius--Schur indicator $-1$.

As in the beginning of the previous section, it is helpful that every semisimple class of the groups $E_7(q)_{\msc}$, $E_7(q)_{\ad}$, and $E_8(q)$ is real.  This is not true in the groups $E_6(q)_{\ad}$, $E_6(q)_{\msc}$, ${^2 E_6}(q)_{\ad}$, or ${^2 E_6}(q)_{\msc}$.  So a first necessary step in understanding these cases is to classify the semisimple classes in these groups which are real.  Additionally, it appears that there are some real semsimple classes $(s)$ of $\bG^{*F^*} =E_6(q)_{\msc}$ such that the Lusztig series $\cE(\bG^F, s)$ has characters which are not real-valued, while $C_{\bG^*}(s)$ has a factor of type $D_4$.  In particular, as in \cite[Lemma 1.64]{JT12} there are unipotent characters of groups of type $D_{2n}$ which are not invariant under the order 2 graph automorphism.  Then our Lemma \ref{RealLemma} does not apply, and one needs more specific information on the action of the inverting element of the real semisimple element $s$ in order to apply Theorem \ref{RealJord} and prove specific characters are not real-valued.  Also, in the cases that $E_6(q)_{\msc}$ has a center of order 3, there are Lusztig series such that $C_{\bG^*}(s)$ is disconnected with 3 components.  It seems plausible that the fact that the number of components is odd will be enough for the conclusions of Lemma \ref{RealLemma} to still hold.  Finally, it has been proved by Ohmori \cite{Oh93} that the group ${^2 E_6}(q)$ has at least two characters which have Frobenius--Schur indicator $-1$.  It may be observed by its character table that the group ${^2 E_6}(2)$ has 3 irreducible characters with this property, and so we must also understand the total number of such characters in the general case.  We hope to address all of these issues in a subsequent paper, and complete the problem of determining the Frobenius--Schur indicators of all irreducible characters of the finite exceptional groups.  

\newpage
\appendix
\section{Tables} \label{Tables}
In Table \ref{G2CMC}, we list some class multiplication coefficients for the group $G_2(q)$ where $q$ is even or a power of 3, proving that every real class is strongly real. When $q$ is even, $G_2(q)$ has two classes of involutions, labeled as {\em class type 2} and {\em class type 3} in {\sf CHEVIE}, and two non-real classes, {\em class type 12} and {\em class type 13}. When $q$ is a power of 3, $G_2(q)$ has one class of involutions, {\em class type 10}, and two non-real classes, {\em class type 8} and {\em class type 9}. To compute the class multiplication coefficient $n_{123}$ for the group $G$, where the conjugacy classes $C_1, C_2, C_3$ are labeled as {\em class type i, class type j, class type k} in {\sf CHEVIE}, we use the sequence of commands \begin{verbatim}> GenCharTab(G);
> ClassMult(g,i,j,k);\end{verbatim} As all class multiplication coefficients are positive for all relevant $q$ in the second, third, and fifth columns, every real element is a product of two involutions when $q$ is even or a power of 3. 
\setcounter{table}{0} \renewcommand{\thetable}{A.\arabic{table}}
\begin{table}[h!]\caption{Class Multiplication Coefficients of $G_2(q)$ }\label{G2CMC}
\resizebox{\linewidth}{!}{
\begin{tabular}{|c|c|c|c|c|}\hline
Class Triple  & ClassMult\_G$2.01(i,j,k)$ & ClassMult\_G$2.02(i,j,k)$ & Class Triple & ClassMult\_G$2.10(i,j,k)$ \\ 
 $(i,j,k)$ & $q$ even, $q\equiv 1\bmod 3$ & $q$ even, $q\equiv 2\bmod 3$ & $(i,j,k)$ & $q\equiv 0 \bmod 3$  \\ \hline
$(3,3,1)$ & $q^8-q^2$ & $q^8-q^2$ & $(10,10,1)$ & $q^8+q^6+q^4$ \\ \hline
$(3,3,2)$ & $q^5+q^4-q^3-q^2$  & $q^5+q^4-q^3-q^2$ & $(10,10,2)$ & $q^5+q^4$ \\ \hline
$(2,3,3)$ & $q^3+q^2-q-1$ & $q^3+q^2-q-1$ & $(10,10,3)$ & $q^5+q^4$ \\ \hline
$(3,3,4)$ & $4q^3-q^2+6q$ & $4q^3-q^2+4q$ & $(10,10,4)$ & $q^4$\\ \hline
$(3,3,5)$ & $2q^3-q^2$ & $2q^3-q^2+2q$ & $(10,10,5)$ & $2q^3$ \\ \hline
$(2,3,6)$ & $q^2-q$ & $q^2+q$ & $(10,10,6)$ & $2q^3$ \\ \hline
$(2,3,7)$ & $2q$ & $2q$ & $(10,10,7)$ & $3q^2$ \\ \hline
$(3,3,8)$ & $2q^2$ & $2q^2$ & $(10,10,10)$ & $q^4+q^2$ \\ \hline
$(3,3,9)$ & $q^5-q^2$ & $q^5+q^2$ & $(10,10,11)$ & $q^3+q^2$ \\ \hline
$(3,3,10)$ & $q^3-q^2$ & $q^3+q^2$ & $(10,10,12)$ & $q^3+q^2$ \\ \hline
$(3,3,11)$ & $3q^2$ & $3q^2$ & $(10,10,13)$ & $2q^2$ \\ \hline
$(3,3,14)$ & $q^3-q^2$ & $q^3-q^2$ & $(10,10,14)$ & $2q^2$ \\ \hline
$(3,3,15)$ & $q^2-q$ & $q^2-q$ & $(10,10,15)$ & $q^3-q^2$ \\ \hline
$(3,3,16)$ & $q^3-q^2-q+1$ & $q^3-q^2-q+1$ & $(10,10,16)$ & $q^2-q$ \\ \hline
$(2,3,17)$ & $q-1$ & $q-1$ & $(10,10,17)$ & $q^3-q^2$ \\ \hline
$(3,3,18)$ & $q^2-2q+1$ & $q^2-2q+1$ & $(10,10,18)$ & $q^2-q$ \\ \hline
$(3,3,19)$ & $q^3+q^2$ & $q^3+q^2$ & $(10,10,19)$ & $q^3+q^2$ \\ \hline
$(3,3,20)$ & $q^2+q$ & $q^2+q$ & $(10,10,20)$ & $q^2+q$ \\ \hline
$(3,3,21)$ & $q^3+q^2-q-1$ & $q^3+q^2-q-1$ & $(10,10,21)$ & $q^3+q^2$ \\ \hline
$(2,3,22)$ & $q+1$ & $q+1$ & $(10,10,22)$ & $q^2+q$ \\ \hline
$(3,3,23)$ & $q^2+2q+1$ & $q^2+2q+1$ & $(10,10,23)$ & $q^2-2q+1$ \\ \hline
$(3,3,24)$ & $q^2-1$ & $q^2-1$ & $(10,10,24)$ & $q^2-1$ \\ \hline
$(3,3,25)$ & $q^2-1$ & $q^2-1$ & $(10,10,25)$ & $q^2-1$ \\ \hline
$(3,3,26)$ & $q^2+q+1$ & $q^2+q+1$ & $(10,10,26)$ & $q^2+2q+1$ \\ \hline
$(3,3,27)$ & $q^2-q+1$ & $q^2-q+1$ & $(10,10,27)$ & $q^2+q+1$ \\ \hline

    \multicolumn{3}{c|}{} & $(10,10,28)$ & $q^2-q+1$ \\ \cline{4-5}
\end{tabular}}
\end{table}

\newpage

In Table \ref{SumTable}, the sums of the character degrees of the relevant exceptional groups are listed as polynomials in $q$.  These are all computed directly from the data of L\"{u}beck \cite{LuWWW}.  While the lists of character degrees given in \cite{LuWWW} for $E_7(q)_{\ad}$ depend on $q$ mod $12$, and for $E_8(q)$ depend on $q$ mod $60$, the character degree sums depend only on the parity of $q$.

\begin{center}
\begin{table}[h!]\caption{Character Degree Sums}\label{SumTable}
\resizebox{\linewidth}{!}{
\begin{tabular}{|c|c|}\hline

$F_4(q), q$ even & $q^{28}+q^{26}+q^{24}+q^{22}+\frac{1}{6}q^{20}-\frac{7}{3}q^{18}-\frac{1}{2}q^{16}-\frac{2}{3}q^{14}+\frac{2}{3}q^{12}-\frac{2}{3}q^{10}+\frac{3}{2}q^8-\frac{7}{3}q^6+\frac{7}{6}q^4$ \\  \hline

$F_4(q), q$ odd & $q^{28}+q^{26}+2q^{24}+2q^{22}+\frac{19}{6}q^{20}-\frac{1}{3}q^{18}+\frac{5}{2}q^{16}+\frac{4}{3}q^{14}+\frac{5}{3}q^{12}+\frac{1}{3}q^{10}+\frac{3}{2}q^8-\frac{7}{3}q^6+\frac{7}{6}q^4+1$ \\ \hline

 & $q^{28}-q^{26}+\sqrt{2}q^{25}+q^{24}-\sqrt{2}q^{23}+q^{22}-\sqrt{2}q^{21}+\frac{4}{3}q^{20}+3\sqrt{2}q^{19}$ \\ 
$^{2}F_4(q^2), q^2= 2^{2m+1} $ & $-2q^{18}-\sqrt{2}q^{17}+2q^{16}-3\sqrt{2}q^{15}-q^{14}+3\sqrt{2}q^{13}-\frac{8}{3}q^{12}+\sqrt{2}q^{11}$\\ 
 & $+q^{10}-3\sqrt{2}q^9+\sqrt{2}q^7-2q^6+\sqrt{2}q^5+\frac{7}{3}q^4-\sqrt{2}q^3$ \\ \hline
& $q^{70}+q^{66}+q^{64}+q^{62}+q^{60}+q^{58}+\frac{2}{3}q^{57}-\frac{4}{3}q^{55}+q^{54}-\frac{2}{3}q^{51}+q^{50}
-\frac{16}{3}q^{49}+5q^{48}$ \\ 
 $E_7(q)_{\ad}$,  & $-\frac{19}{3}q^{47}+8q^{46}-\frac{35}{3}q^{45}+14q^{44}-21q^{43}+23q^{42}
-\frac{76}{3}q^{41}+29q^{40}-\frac{101}{3}q^{39}+37q^{38}$ \\
$q$ even & $-\frac{131}{3}q^{37}+45q^{36}-\frac{134}{3}q^{35}
+48q^{34}-\frac{151}{3}q^{33}+51q^{32}-52q^{31}+51q^{30}-\frac{140}{3}q^{29}$ \\
 & $+47q^{28}
-\frac{133}{3}q^{27}+41q^{26}-\frac{115}{3}q^{25}+35q^{24}-\frac{86}{3}q^{23}+26q^{22}-24q^{21}
+19q^{20}-\frac{49}{3}q^{19}$ \\ 
& $+13q^{18}-\frac{28}{3}q^{17}+7q^{16}-6q^{15}+4q^{14}-2q^{13}
+2q^{12}-q^{11}$ \\ \hline
& $q^{70}+q^{66}+2q^{64}+3q^{62}+4q^{60}+6q^{58}+\frac{2}{3}q^{57}+7q^{56}-\frac{4}{3}q^{55}
+10q^{54}+12q^{52}-\frac{8}{3}q^{51}$ \\ $E_7(q)_{\ad}$, & $+15q^{50}-\frac{28}{3}q^{49}+24q^{48}-\frac{38}{3}q^{47}
+31q^{46}-26q^{45}+41q^{44}-40q^{43}+58q^{42}-50q^{41}$ \\ 
$q$ odd & $+68q^{40}-70q^{39}+81q^{38}-84q^{37}+95q^{36}-\frac{268}{3}q^{35}+99q^{34}-\frac{310}{3}q^{33}+103q^{32}
-\frac{304}{3}q^{31}$ \\ & $+103q^{30}-\frac{280}{3}q^{29}+93q^{28}-92q^{27}+82q^{26}-74q^{25}
+70q^{24}-58q^{23}+52q^{22}-50q^{21}$ \\ & $+38q^{20}-30q^{19}+26q^{18}-\frac{56}{3}q^{17}
+14q^{16}-\frac{40}{3}q^{15}+8q^{14}-\frac{8}{3}q^{13}+4q^{12}-2q^{11}-\frac{4}{3}q^9+\frac{2}{3}q^7+1$ \\ \hline
 & $q^{128}+q^{124}+q^{122}+q^{120}+q^{118}+q^{116}+\frac{2}{3}q^{115}-\frac{2}{3}q^{113}+2q^{112}
-\frac{2}{3}q^{111}+q^{110}+3q^{108}$ \\ 
& $-\frac{26}{3}q^{107}+10q^{106}-13q^{105}+\frac{203}{10}q^{104}
-24q^{103}+\frac{176}{5}q^{102}-\frac{173}{3}q^{101}+\frac{677}{10}q^{100}-\frac{248}{3}q^{99}$ \\ 
& $+108q^{98}
-\frac{398}{3}q^{97}+173q^{96}-225q^{95}+\frac{2561}{10}q^{94}-297q^{93}+\frac{3617}{10}q^{92}
-\frac{1259}{3}q^{91}+495q^{90}$ \\ 
& $-\frac{1798}{3}q^{89}+\frac{6653}{10}q^{88}-735q^{87}+\frac{4236}{5}q^{86}
-953q^{85}+\frac{5373}{5}q^{84}-1219q^{83}+\frac{6541}{5}q^{82}$ \\ 
& $-1408q^{81}+\frac{15681}{10}q^{80}
-1703q^{79}+\frac{9178}{5}q^{78}-\frac{6029}{3}q^{77}+\frac{10556}{5}q^{76}-2200q^{75}
+\frac{23681}{10}q^{74}$ \\ 
$E_8(q)$, & $-\frac{7522}{3}q^{73}+2626q^{72}-\frac{8309}{3}q^{71}+\frac{14169}{5}q^{70}
-\frac{8660}{3}q^{69}+\frac{15157}{5}q^{68}-\frac{9374}{3}q^{67}+\frac{15831}{5}q^{66}$ \\ 
$q$ even & $-3253q^{65}
+\frac{32683}{10}q^{64}-3236q^{63}+\frac{16513}{5}q^{62}-\frac{10006}{3}q^{61}+\frac{16474}{5}q^{60}
-\frac{9857}{3}q^{59}+\frac{16098}{5}q^{58}$ \\ 
& $-\frac{9352}{3}q^{57}+\frac{31093}{10}q^{56}-\frac{9184}{3}q^{55}
+\frac{14671}{5}q^{54}-2851q^{53}+\frac{13687}{5}q^{52}-2575q^{51}+\frac{12464}{5}q^{50}
$ \\ 
& $-\frac{7225}{3}q^{49}+2247q^{48}-\frac{6328}{3}q^{47}+\frac{19691}{10}q^{46}-\frac{5416}{3}q^{45}
+\frac{8536}{5}q^{44}-1601q^{43}+\frac{7178}{5}q^{42}$ \\ 
& $-1308q^{41}+\frac{11881}{10}q^{40}
-\frac{3148}{3}q^{39}+\frac{4786}{5}q^{38}-\frac{2632}{3}q^{37}+\frac{3788}{5}q^{36}-\frac{1966}{3}q^{35}
+\frac{2861}{5}q^{34}$ \\ 
& $-\frac{1457}{3}q^{33}+\frac{4323}{10}q^{32}-
\frac{1139}{3}q^{31}+303q^{30}
-\frac{749}{3}q^{29}+\frac{2117}{10}q^{28}-\frac{499}{3}q^{27}+\frac{1401}{10}q^{26}-\frac{359}{3}q^{25}
+$ \\ 
& $88q^{24}-64q^{23}+50q^{22}-35q^{21}+\frac{277}{10}q^{20}-\frac{70}{3}q^{19}+\frac{56}{5}q^{18}
-7q^{17}+\frac{73}{10}q^{16}-3q^{15}$ \\ 
& $+2q^{14}-\frac{7}{3}q^{13}+\frac{2}{3}q^{11}$ \\ \hline
 & $q^{128}+q^{124}+q^{122}+2q^{120}+2q^{118}+3q^{116}+\frac{2}{3}q^{115}+3q^{114}
-\frac{2}{3}q^{113}+6q^{112}-\frac{2}{3}q^{111}$ \\ 
& $+6q^{110}+11q^{108}-\frac{32}{3}q^{107}+20q^{106}
-19q^{105}+\frac{393}{10}q^{104}-\frac{109}{3}q^{103}+\frac{326}{5}q^{102}-\frac{259}{3}q^{101}
$ \\ 
& $+\frac{1167}{10}q^{100}-\frac{403}{3}q^{99}+189q^{98}-\frac{652}{3}q^{97}+298q^{96}-\frac{1099}{3}q^{95}
+\frac{4411}{10}q^{94}-508q^{93}+\frac{6347}{10}q^{92}$ \\ 
& $-\frac{2168}{3}q^{91}+872q^{90}-1031q^{89}
+\frac{11783}{10}q^{88}-\frac{3931}{3}q^{87}+\frac{7626}{5}q^{86}-\frac{5110}{3}q^{85}+\frac{9703}{5}q^{84}
$ \\ 
& $-2184q^{83}+\frac{11956}{5}q^{82}-2593q^{81}+\frac{28941}{10}q^{80}-3139q^{79}
+\frac{17063}{5}q^{78}-\frac{11158}{3}q^{77}+\frac{19796}{5}q^{76}$ \\ 
$E_8(q)$, & $-4167q^{75}+\frac{44851}{10}q^{74}
-\frac{14248}{3}q^{73}+5002q^{72}-\frac{15821}{3}q^{71}+\frac{27299}{5}q^{70}-\frac{16837}{3}q^{69}
+\frac{29382}{5}q^{68}$ \\ 
$q$ odd  & $-\frac{18200}{3}q^{67}+\frac{30966}{5}q^{66}-\frac{19079}{3}q^{65}+\frac{64433}{10}q^{64}
-\frac{19357}{3}q^{63}+\frac{32833}{5}q^{62}-\frac{19900}{3}q^{61}+\frac{32999}{5}q^{60}$ \\ 
& $-6589q^{59}
+\frac{32568}{5}q^{58}-6377q^{57}+\frac{63313}{10}q^{56}-\frac{18716}{3}q^{55}+\frac{30191}{5}q^{54}
-\frac{17615}{3}q^{53}+\frac{28387}{5}q^{52}$ \\ 
& $-\frac{16246}{3}q^{51}+\frac{26119}{5}q^{50}-\frac{15100}{3}q^{49}
+4747q^{48}-\frac{13421}{3}q^{47}+\frac{42111}{10}q^{46}-3918q^{45}+\frac{18391}{5}q^{44}
$ \\ 
& $-3441q^{43}+\frac{15703}{5}q^{42}-\frac{8609}{3}q^{41}+\frac{26261}{10}q^{40}-\frac{7102}{3}q^{39}
+\frac{10721}{5}q^{38}-\frac{5842}{3}q^{37}+\frac{8593}{5}q^{36}$ \\ 
& $-\frac{4510}{3}q^{35}+\frac{6626}{5}q^{34}
-\frac{3463}{3}q^{33}+\frac{10083}{10}q^{32}-\frac{2624}{3}q^{31}+732q^{30}-\frac{1832}{3}q^{29}
+\frac{5167}{10}q^{28}$ \\ 
& $-\frac{1282}{3}q^{27}+\frac{3511}{10}q^{26}-288q^{25}+228q^{24}
-\frac{523}{3}q^{23}+136q^{22}-\frac{317}{3}q^{21}+\frac{787}{10}q^{20}-\frac{175}{3}q^{19}$ \\ 
& $+\frac{196}{5}q^{18}
-27q^{17}+\frac{203}{10}q^{16}-\frac{40}{3}q^{15}+8q^{14}-4q^{13}+2q^{12}-q^{11}-\frac{2}{3}q^9+\frac{2}{3}q^7+1$ \\ \hline
\end{tabular}}
\end{table}
\end{center}

\newpage

In Table \ref{InvolTable}, we list the number of involutions in each group as a polynomial in $q$.  These can be computed when $q$ is even using \cite{AsSe76}, and when $q$ is odd using \cite{Iw70}.  These results are also summarized in \cite{BuTh18}.

%\newpage

\begin{center}
\begin{table}[h!]\caption{Numbers of Involutions}\label{InvolTable} 
\resizebox{\linewidth}{!}{
\begin{tabular}{|c|c|}\hline
$E_7(q)_{\ad}$, $q$ even & $q^{70}+q^{66}+q^{64}+q^{62}+q^{60}+q^{58}+q^{54}-q^{52}$ \\
 & $-q^{50}-q^{48}-2q^{46}-q^{44}-q^{42}-q^{40}+q^{32}+q^{28}$ \\ \hline
$E_7(q)_{\ad}$, $q$ odd & $q^{70}+q^{66}+2q^{64}+3q^{62}+4q^{60}+6q^{58}+7q^{56}+10q^{54}+10q^{52}+11q^{50}+12q^{48}$ \\
 & $+11q^{46}+11q^{44}+10q^{42}+8q^{40}+7q^{38}+5q^{36}+3q^{34}+3q^{32}+q^{30}+q^{28}+1$ \\ \hline
$E_8(q)$, $q$ even & $q^{128}+q^{124}+q^{122}+q^{120}+q^{118}+q^{116}+2q^{112}+q^{106}-q^{104}$ \\ 
 & $-2q^{98}-q^{94}-2q^{92}-2q^{88}-q^{86}-2q^{82}-q^{76}+q^{74}+q^{68}+q^{62}+q^{58}$ \\ \hline
 & $q^{128}+q^{124}+q^{122}+2q^{120}+2q^{118}+3q^{116}+3q^{114}+6q^{112}+5q^{110}$ \\
$E_8(q)$, $q$ odd  & $+7q^{108}+8q^{106}+9q^{104}+10q^{102}+12q^{100}+11q^{98}+14q^{96}+13q^{94}$ \\
 & $+14q^{92}+14q^{90}+14q^{88}+13q^{86}+14q^{84}+11q^{82}+12q^{80}+10q^{78}$ \\ 
 & $+9q^{76}+8q^{74}+7q^{72}+5q^{70}+5q^{68}+3q^{66}+3q^{64}+2q^{62}+q^{60}+q^{58}+q^{56}+1$ \\ \hline
\end{tabular}}
\end{table}
\end{center}

In Tables \ref{E7Table} and \ref{E8Table}, we list the degrees of the non-real characters of the groups $E_7(q)_{\ad}$ and $E_8(q)$. We write these degrees in terms of the cyclotomic polynomials $\Phi_n$, defined recursively by $\Phi_1=q-1$ and $$\Phi_n=\frac{q^n-1}{\displaystyle\prod_{i\mid n, 1\leq i<n}\Phi_i}.$$
As described in Section \ref{E7E8Section}, these non-real characters $\chi$ have a Jordan decomposition $(s, \psi)$, where $\psi$ is a (non-real) character of $C_{{\bf{G}^*}^{F^*}}(s)$ for some semisimple $s\in {\bf{G}^*}^{F^*}$. In the first two columns, we list these centralizers and the $p'$-part of their index in ${\bf{G}^*}^{F^*}$.  The structure of the centralizer in the first column is given in terms of the types of the simple algebraic group factors, along with any central torus factor, where a polynomial in $q$ represents a torus of that order.  In the third column, we list the degrees $\psi(1)$, and as $\chi(1)=[{\bf{G}^*}^{F^*}:C_{{\bf{G}^*}^{F^*}}(s)]_{p'}\psi(1)$, we obtain the degrees $\chi(1)$ listed in the fourth column. In the remaining columns, we list the number of non-real characters of each degree, which depends on $q$.

Note that in Table \ref{E8Table}, while the number of non-real characters of each degree depends on $q$ mod $6$, the total number of non-real characters of $E_8(q)$ depends only on whether $q$ is a power of $2$, a power of $3$, or a power of some other prime.
\begin{center}
\begin{table}[h!]\caption{Non-real characters of $E_7(q)_{\ad}$}\label{E7Table} 
\resizebox{\linewidth}{!}{
\begin{tabular}{|c|c|c|c|c|c|}\hline
Centralizer $C_{{\bf{G}^*}^{F^*}}(s)$ & $[{\bf{G}^*}^{F^*}:C_{{\bf{G}^*}^{F^*}}(s)]_{p'}$ & Degree of non-real unipotent in $C_{{\bf{G}^*}^{F^*}}(s)$ & Degree of non-real in $E_7(q)_{\ad}$ & $q$ even & $q$ odd \\ \hline
$E_7(q)$ & $1$ & $\frac{1}{2}q^{11}\Phi_5\Phi_9\Phi_{12}\Phi_8\Phi_7\Phi_4^2\Phi_3^3\Phi_1^7$ & $\frac{1}{2}q^{11}\Phi_5\Phi_9\Phi_{12}\Phi_8\Phi_7\Phi_4^2\Phi_3^3\Phi_1^7$ & 2 & 4 \\ \hline
$E_7(q)$ & $1$ & $\frac{1}{3}q^7\Phi_8\Phi_{10}\Phi_5\Phi_{14}\Phi_7\Phi_4^2\Phi_1^6\Phi_2^6$ & $\frac{1}{3}q^7\Phi_8\Phi_{10}\Phi_5\Phi_{14}\Phi_7\Phi_4^2\Phi_1^6\Phi_2^6$ & 2 & 4 \\ \hline 
$E_7(q)$ & $1$ & $\frac{1}{3}q^{16}\Phi_8\Phi_5\Phi_{10}\Phi_7\Phi_{14}\Phi_4^2\Phi_1^6\Phi_2^6$ & $\frac{1}{3}q^{16}\Phi_8\Phi_5\Phi_{10}\Phi_7\Phi_{14}\Phi_4^2\Phi_1^6\Phi_2^6$ & 2 & 4 \\ \hline 
$E_6(q).(q-1)$ & $\Phi_2^3\Phi_6\Phi_7\Phi_{10}\Phi_{14}\Phi_{18}$ & 
$\frac{1}{3}q^7\Phi_1^6\Phi_2^4\Phi_4^2\Phi_5\Phi_8$ & 
$\frac{1}{3}q^7\Phi_1^6\Phi_2^7\Phi_4^2\Phi_5\Phi_6\Phi_7\Phi_8\Phi_{10}\Phi_{14}\Phi_{18}$
 & $q-2$ & $q-3$\\ \hline
$^2E_6(q).(q+1)$ & $\Phi_1^3\Phi_3\Phi_5\Phi_7\Phi_9\Phi_{14}$ & $\frac{1}{3}q^7\Phi_1^4\Phi_2^6\Phi_4^2\Phi_8\Phi_{10}$ & $\frac{1}{3}q^7\Phi_1^7\Phi_2^6\Phi_3\Phi_4^2\Phi_5\Phi_7\Phi_8\Phi_9\Phi_{10}\Phi_{14}$ & $q$ & $q-1$\\ \hline
\end{tabular}}
\end{table}
\end{center}
\newpage
\begin{center}
\begin{table}[h!]\caption{Non-real characters of $E_8(q)$}\label{E8Table} 
\begin{adjustbox}{max width=\textwidth, max height=\textheight}
\rotatebox{270}{
\begin{tabular}{|c|c|c|}\hline
Centralizer $C_{{\bf{G}^*}^{F^*}}(s)$ & $[{\bf{G}^*}^{F^*}:C_{{\bf{G}^*}^{F^*}}(s)]_{p'}$ & Degree of non-real unipotent in $C_{{\bf{G}^*}^{F^*}}(s)$  \\ \hline

$E_8(q)$ & 1 & $\frac{1}{2}q^{11}\Phi_1^7\Phi_3^4\Phi_4^4\Phi_5^2\Phi_7\Phi_8^2\Phi_9\Phi_{12}^2\Phi_{15}\Phi_{20}\Phi_{24}$  \\ \hline

$E_8(q)$ & 1 & $\frac{1}{2}q^{26}\Phi_1^7\Phi_3^4\Phi_4^4\Phi_5^2\Phi_7\Phi_8^2\Phi_9\Phi_{12}^2\Phi_{15}\Phi_{20}\Phi_{24}$ \\ \hline

$E_8(q)$ & 1 & $\frac{1}{3}q^7\Phi_1^6\Phi_2^6\Phi_4^4\Phi_5^2\Phi_7\Phi_8^2\Phi_{10}^2\Phi_{12}\Phi_{14}\Phi_{20}\Phi_{24}$  \\ \hline

$E_8(q)$ & 1 & $\frac{1}{3}q^8\Phi_1^6\Phi_2^6\Phi_4^4\Phi_5^2\Phi_7\Phi_8^2\Phi_{10}^2\Phi_{14}\Phi_{15}\Phi_{20}\Phi_{30}$ \\ \hline

$E_8(q)$ & 1 & $\frac{1}{3}q^{32}\Phi_1^6\Phi_2^6\Phi_4^4\Phi_5^2\Phi_7\Phi_8^2\Phi_{10}^2\Phi_{14}\Phi_{15}\Phi_{20}\Phi_{30}$ \\ \hline

$E_8(q)$ & 1 & $\frac{1}{3}q^{37}\Phi_1^6\Phi_2^6\Phi_4^4\Phi_5^2\Phi_7\Phi_8^2\Phi_{10}^2\Phi_{12}\Phi_{14}\Phi_{20}\Phi_{24}$  \\ \hline

$E_8(q)$ & 1 & $\frac{1}{6}q^{16}\Phi_1^6\Phi_2^8\Phi_4^4\Phi_5^2\Phi_7\Phi_8^2\Phi_{10}^2\Phi_{14}\Phi_{15}\Phi_{18}\Phi_{20}\Phi_{24}$ \\ \hline

$E_8(q)$ & 1 & $\frac{1}{6}q^{16}\Phi_1^6\Phi_2^8\Phi_4^4\Phi_5^2\Phi_6^2\Phi_7\Phi_8^2\Phi_{10}^2\Phi_{12}\Phi_{14}\Phi_{18}\Phi_{20}\Phi_{30}$ \\ \hline

$E_8(q)$ & 1 & $\frac{1}{5}q^{16}\Phi_1^8\Phi_2^8\Phi_3^4\Phi_4^4\Phi_6^4\Phi_7\Phi_8^2\Phi_{9}\Phi_{12}^2\Phi_{14}\Phi_{18}\Phi_{24}$ \\ \hline

$E_8(q)$ & 1 & $\frac{1}{6}q^{16}\Phi_1^8\Phi_2^6\Phi_3^2\Phi_4^4\Phi_5^2\Phi_7\Phi_8^2\Phi_9\Phi_{10}^2\Phi_{12}\Phi_{14}\Phi_{15}\Phi_{20}$ \\ \hline

$E_8(q)$ & 1 & $\frac{1}{6}q^{16}\Phi_1^8\Phi_2^6\Phi_4^4\Phi_5^2\Phi_7\Phi_8^2\Phi_9\Phi_{10}^2\Phi_{14}\Phi_{20}\Phi_{24}\Phi_{30}$ \\ \hline

$E_8(q)$ & 1 & $\frac{1}{4}q^{16}\Phi_1^8\Phi_2^8\Phi_3^4\Phi_5^2\Phi_6^4\Phi_7\Phi_9\Phi_{10}^2\Phi_{14}\Phi_{15}\Phi_{18}\Phi_{30}$ \\ \hline

$E_7(q).A_1(q)$ & $\Phi_3\Phi_4^2\Phi_5\Phi_6\Phi_8\Phi_{10}\Phi_{12}\Phi_{15}\Phi_{20}\Phi_{24}\Phi_{30}$ & $\frac{1}{3}q^7\Phi_1^6\Phi_2^7\Phi_4^2\Phi_5\Phi_6\Phi_7\Phi_8\Phi_{10}\Phi_{14}\Phi_{18}$ \\ \hline

$E_7(q).A_1(q)$ & $\Phi_3\Phi_4^2\Phi_5\Phi_6\Phi_8\Phi_{10}\Phi_{12}\Phi_{15}\Phi_{20}\Phi_{24}\Phi_{30}$ & $\frac{1}{2}q^{11}\Phi_5\Phi_9\Phi_{12}\Phi_8\Phi_7\Phi_4^2\Phi_3^3\Phi_1^7$ \\ \hline

$E_7(q).A_1(q)$ & $\Phi_3\Phi_4^2\Phi_5\Phi_6\Phi_8\Phi_{10}\Phi_{12}\Phi_{15}\Phi_{20}\Phi_{24}\Phi_{30}$ & $\frac{1}{3}q^{16}\Phi_8\Phi_5\Phi_{10}\Phi_7\Phi_{14}\Phi_4^2\Phi_1^6\Phi_2^6$ \\ \hline

$E_7(q).A_1(q)$ & $\Phi_3\Phi_4^2\Phi_5\Phi_6\Phi_8\Phi_{10}\Phi_{12}\Phi_{15}\Phi_{20}\Phi_{24}\Phi_{30}$ & $\frac{1}{3}q^8\Phi_1^6\Phi_2^7\Phi_4^2\Phi_5\Phi_6\Phi_7\Phi_8\Phi_{10}\Phi_{14}\Phi_{18}$ \\ \hline

$E_7(q).A_1(q)$ & $\Phi_3\Phi_4^2\Phi_5\Phi_6\Phi_8\Phi_{10}\Phi_{12}\Phi_{15}\Phi_{20}\Phi_{24}\Phi_{30}$ & $\frac{1}{2}q^{12}\Phi_5\Phi_9\Phi_{12}\Phi_8\Phi_7\Phi_4^2\Phi_3^3\Phi_1^7$\\ \hline

$E_7(q).A_1(q)$ & $\Phi_3\Phi_4^2\Phi_5\Phi_6\Phi_8\Phi_{10}\Phi_{12}\Phi_{15}\Phi_{20}\Phi_{24}\Phi_{30}$ & $\frac{1}{3}q^{17}\Phi_8\Phi_5\Phi_{10}\Phi_7\Phi_{14}\Phi_4^2\Phi_1^6\Phi_2^6$ \\ \hline

$E_7(q).(q-1)$ & $\Phi_2\Phi_3\Phi_4^2\Phi_5\Phi_6\Phi_8\Phi_{10}\Phi_{12}\Phi_{15}\Phi_{20}\Phi_{24}\Phi_{30}$ & 
$\frac{1}{3}q^7\Phi_1^6\Phi_2^7\Phi_4^2\Phi_5\Phi_6\Phi_7\Phi_8\Phi_{10}\Phi_{14}\Phi_{18}$ \\ \hline

$E_7(q).(q-1)$ & $\Phi_2\Phi_3\Phi_4^2\Phi_5\Phi_6\Phi_8\Phi_{10}\Phi_{12}\Phi_{15}\Phi_{20}\Phi_{24}\Phi_{30}$ & 
$\frac{1}{2}q^{11}\Phi_5\Phi_9\Phi_{12}\Phi_8\Phi_7\Phi_4^2\Phi_3^3\Phi_1^7$ \\ \hline

$E_7(q).(q-1)$ & $\Phi_2\Phi_3\Phi_4^2\Phi_5\Phi_6\Phi_8\Phi_{10}\Phi_{12}\Phi_{15}\Phi_{20}\Phi_{24}\Phi_{30}$ & 
$\frac{1}{3}q^{16}\Phi_8\Phi_5\Phi_{10}\Phi_7\Phi_{14}\Phi_4^2\Phi_1^6\Phi_2^6$ \\ \hline

$E_7(q).(q+1)$ & $\Phi_1\Phi_3\Phi_4^2\Phi_5\Phi_6\Phi_8\Phi_{10}\Phi_{12}\Phi_{15}\Phi_{20}\Phi_{24}\Phi_{30}$ & 
$\frac{1}{3}q^7\Phi_1^6\Phi_2^7\Phi_4^2\Phi_5\Phi_6\Phi_7\Phi_8\Phi_{10}\Phi_{14}\Phi_{18}$ \\ \hline

$E_7(q).(q+1)$ & $\Phi_1\Phi_3\Phi_4^2\Phi_5\Phi_6\Phi_8\Phi_{10}\Phi_{12}\Phi_{15}\Phi_{20}\Phi_{24}\Phi_{30}$ & 
$\frac{1}{2}q^{11}\Phi_5\Phi_9\Phi_{12}\Phi_8\Phi_7\Phi_4^2\Phi_3^3\Phi_1^7$ \\ \hline

$E_7(q).(q+1)$ & $\Phi_1\Phi_3\Phi_4^2\Phi_5\Phi_6\Phi_8\Phi_{10}\Phi_{12}\Phi_{15}\Phi_{20}\Phi_{24}\Phi_{30}$ & 
$\frac{1}{3}q^{16}\Phi_8\Phi_5\Phi_{10}\Phi_7\Phi_{14}\Phi_4^2\Phi_1^6\Phi_2^6$ \\ \hline

$E_6(q).A_2(q)$ & $\Phi_2^3\Phi_4^2\Phi_5\Phi_6^2\Phi_7\Phi_8\Phi_{10}^2\Phi_{12}\Phi_{14}\Phi_{15}\Phi_{18}\Phi_{20}\Phi_{24}\Phi_{30}$ & $\frac{1}{3}q^7\Phi_1^6\Phi_2^4\Phi_4^2\Phi_5\Phi_8$ \\ \hline

$E_6(q).A_2(q)$ & $\Phi_2^3\Phi_4^2\Phi_5\Phi_6^2\Phi_7\Phi_8\Phi_{10}^2\Phi_{12}\Phi_{14}\Phi_{15}\Phi_{18}\Phi_{20}\Phi_{24}\Phi_{30}$ & $\frac{1}{3}q^8\Phi_1^6\Phi_2^5\Phi_4^2\Phi_5\Phi_8$ \\ \hline

$E_6(q).A_2(q)$ & $\Phi_2^3\Phi_4^2\Phi_5\Phi_6^2\Phi_7\Phi_8\Phi_{10}^2\Phi_{12}\Phi_{14}\Phi_{15}\Phi_{18}\Phi_{20}\Phi_{24}\Phi_{30}$ & $\frac{1}{3}q^{10}\Phi_1^6\Phi_2^4\Phi_4^2\Phi_5\Phi_8$ \\ \hline

$^2E_6(q). \, ^2A_2(q)$ & $\Phi_1^3\Phi_3^2\Phi_4^2\Phi_5^2\Phi_7\Phi_8\Phi_9\Phi_{10}\Phi_{12}\Phi_{14}\Phi_{15}\Phi_{20}\Phi_{24}\Phi_{30}$ & $\frac{1}{3}q^7\Phi_1^4\Phi_2^6\Phi_4^2\Phi_8\Phi_{10}$ \\ \hline

$^2E_6(q). \, ^2A_2(q)$ & $\Phi_1^3\Phi_3^2\Phi_4^2\Phi_5^2\Phi_7\Phi_8\Phi_9\Phi_{10}\Phi_{12}\Phi_{14}\Phi_{15}\Phi_{20}\Phi_{24}\Phi_{30}$ & $\frac{1}{3}q^8\Phi_1^5\Phi_2^6\Phi_4^2\Phi_8\Phi_{10}$ \\ \hline

$^2E_6(q). \, ^2A_2(q)$ & $\Phi_1^3\Phi_3^2\Phi_4^2\Phi_5^2\Phi_7\Phi_8\Phi_9\Phi_{10}\Phi_{12}\Phi_{14}\Phi_{15}\Phi_{20}\Phi_{24}\Phi_{30}$ & $\frac{1}{3}q^{10}\Phi_1^4\Phi_2^6\Phi_4^2\Phi_8\Phi_{10}$ \\ \hline

$E_6(q).A_1(q).(q-1)$ & $\Phi_2^3\Phi_3\Phi_4^2\Phi_5\Phi_6^2\Phi_7\Phi_8\Phi_{10}^2\Phi_{12}\Phi_{14}\Phi_{15}\Phi_{18}\Phi_{20}\Phi_{24}\Phi_{30}$ & 
$\frac{1}{3}q^7\Phi_1^6\Phi_2^4\Phi_4^2\Phi_5\Phi_8$ \\ \hline

$E_6(q).A_1(q).(q-1)$ & $3\Phi_2^3\Phi_3\Phi_4^2\Phi_5\Phi_6^2\Phi_7\Phi_8\Phi_{10}^2\Phi_{12}\Phi_{14}\Phi_{15}\Phi_{18}\Phi_{20}\Phi_{24}\Phi_{30}$ & 
$\frac{1}{3}q^8\Phi_1^6\Phi_2^4\Phi_4^2\Phi_5\Phi_8$ \\ \hline

$^2E_6(q).A_1(q).(q+1)$ & $\Phi_1^3\Phi_3^2\Phi_4^2\Phi_5^2\Phi_6\Phi_7\Phi_8\Phi_9\Phi_{10}\Phi_{12}\Phi_{14}\Phi_{15}\Phi_{20}\Phi_{24}\Phi_{30}$ & 
$\frac{1}{3}q^7\Phi_1^4\Phi_2^6\Phi_4^2\Phi_8\Phi_{10}$ \\ \hline

$^2E_6(q).A_1(q).(q+1)$ & $\Phi_1^3\Phi_3^2\Phi_4^2\Phi_5^2\Phi_6\Phi_7\Phi_8\Phi_9\Phi_{10}\Phi_{12}\Phi_{14}\Phi_{15}\Phi_{20}\Phi_{24}\Phi_{30}$ & 
$\frac{1}{3}q^8\Phi_1^4\Phi_2^6\Phi_4^2\Phi_8\Phi_{10}$ \\ \hline

$E_6(q) .(q-1)^2$ & $\Phi_2^4\Phi_3\Phi_4^2\Phi_5\Phi_6^2\Phi_7\Phi_8\Phi_{10}^2\Phi_{12}\Phi_{14}\Phi_{15}\Phi_{18}\Phi_{20}\Phi_{24}\Phi_{30}$ & $\frac{1}{3}q^7\Phi_1^6\Phi_2^4\Phi_4^2\Phi_5\Phi_8$ \\ \hline

$E_6(q).(q^2-1)$ & $\Phi_1\Phi_2^3\Phi_3\Phi_4^2\Phi_5\Phi_6^2\Phi_7\Phi_8\Phi_{10}^2\Phi_{12}\Phi_{14}\Phi_{15}\Phi_{18}\Phi_{20}\Phi_{24}\Phi_{30}$ & $\frac{1}{3}q^7\Phi_1^6\Phi_2^4\Phi_4^2\Phi_5\Phi_8$ \\ \hline

$^2E_6(q) .(q^2-1)$ & $\Phi_1^3\Phi_2\Phi_3^2\Phi_4^2\Phi_5^2\Phi_6\Phi_7\Phi_8\Phi_9\Phi_{10}\Phi_{12}\Phi_{14}\Phi_{15}\Phi_{20}\Phi_{24}\Phi_{30}$ & $\frac{1}{3}q^7\Phi_1^4\Phi_2^6\Phi_4^2\Phi_8\Phi_{10}$ \\ \hline

$^2E_6(q).(q^2-q+1)$ & $\Phi_1^4\Phi_2^2\Phi_3^2\Phi_4^2\Phi_5^2\Phi_7\Phi_8\Phi_9\Phi_{10}\Phi_{12}\Phi_{14}\Phi_{15}\Phi_{20}\Phi_{24}\Phi_{30}$ & $\frac{1}{3}q^7\Phi_1^4\Phi_2^6\Phi_4^2\Phi_8\Phi_{10}$ \\ \hline

$E_6(q).(q^2+q+1)$ & $\Phi_1^2\Phi_2^4\Phi_4^2\Phi_5\Phi_6^2\Phi_7\Phi_8\Phi_{10}^2\Phi_{12}\Phi_{14}\Phi_{15}\Phi_{18}\Phi_{20}\Phi_{24}\Phi_{30}$ & $\frac{1}{3}q^7\Phi_1^6\Phi_2^4\Phi_4^2\Phi_5\Phi_8$ \\ \hline

$^2E_6(q) .(q+1)^2$ & $\Phi_1^4\Phi_3^2\Phi_4^2\Phi_5^2\Phi_6\Phi_7\Phi_8\Phi_9\Phi_{10}\Phi_{12}\Phi_{14}\Phi_{15}\Phi_{20}\Phi_{24}\Phi_{30}$ & $\frac{1}{3}q^7\Phi_1^4\Phi_2^6\Phi_4^2\Phi_8\Phi_{10}$ \\ \hline
\end{tabular}
}
\end{adjustbox}
\end{table}
\end{center}
\newpage
\begin{center}
\begin{table}[h!]
\begin{adjustbox}{max width=\textwidth, max height=\textheight}
\rotatebox{270}{
\begin{tabular}{|c|c|c|c|c|c|}\hline
Degree of non-real in $E_8(q)$ & $q\equiv 1 \bmod 6$ & $q\equiv 2 \bmod 6$ & $q\equiv 3 \bmod 6$ & $q\equiv 4 \bmod 6$ & $q\equiv 5 \bmod 6$ \\ \hline

$\frac{1}{2}q^{11}\Phi_1^7\Phi_3^4\Phi_4^4\Phi_5^2\Phi_7\Phi_8^2\Phi_9\Phi_{12}^2\Phi_{15}\Phi_{20}\Phi_{24}$ & 2 & 2 &2 & 2 & 2 \\ \hline

 $\frac{1}{2}q^{26}\Phi_1^7\Phi_3^4\Phi_4^4\Phi_5^2\Phi_7\Phi_8^2\Phi_9\Phi_{12}^2\Phi_{15}\Phi_{20}\Phi_{24}$ & 2 & 2 &2 & 2 & 2 \\ \hline

$\frac{1}{3}q^7\Phi_1^6\Phi_2^6\Phi_4^4\Phi_5^2\Phi_7\Phi_8^2\Phi_{10}^2\Phi_{12}\Phi_{14}\Phi_{20}\Phi_{24}$ & 2 & 2 &2 & 2 & 2 \\ \hline

$\frac{1}{3}q^8\Phi_1^6\Phi_2^6\Phi_4^4\Phi_5^2\Phi_7\Phi_8^2\Phi_{10}^2\Phi_{14}\Phi_{15}\Phi_{20}\Phi_{30}$ & 2 & 2 &2 & 2 & 2 \\ \hline

$\frac{1}{3}q^{32}\Phi_1^6\Phi_2^6\Phi_4^4\Phi_5^2\Phi_7\Phi_8^2\Phi_{10}^2\Phi_{14}\Phi_{15}\Phi_{20}\Phi_{30}$ & 2 & 2 &2 & 2 & 2 \\ \hline

$\frac{1}{3}q^{37}\Phi_1^6\Phi_2^6\Phi_4^4\Phi_5^2\Phi_7\Phi_8^2\Phi_{10}^2\Phi_{12}\Phi_{14}\Phi_{20}\Phi_{24}$ & 2 & 2 &2 & 2 & 2 \\ \hline

$\frac{1}{6}q^{16}\Phi_1^6\Phi_2^8\Phi_4^4\Phi_5^2\Phi_7\Phi_8^2\Phi_{10}^2\Phi_{14}\Phi_{15}\Phi_{18}\Phi_{20}\Phi_{24}$ & 2 & 2 &2 & 2 & 2 \\ \hline

 $\frac{1}{6}q^{16}\Phi_1^6\Phi_2^8\Phi_4^4\Phi_5^2\Phi_6^2\Phi_7\Phi_8^2\Phi_{10}^2\Phi_{12}\Phi_{14}\Phi_{18}\Phi_{20}\Phi_{30}$ & 2 & 2 &2 & 2 & 2 \\ \hline

$\frac{1}{5}q^{16}\Phi_1^8\Phi_2^8\Phi_3^4\Phi_4^4\Phi_6^4\Phi_7\Phi_8^2\Phi_{9}\Phi_{12}^2\Phi_{14}\Phi_{18}\Phi_{24}$ & 4 & 4 & 4 & 4 & 4 \\ \hline

$\frac{1}{6}q^{16}\Phi_1^8\Phi_2^6\Phi_3^2\Phi_4^4\Phi_5^2\Phi_7\Phi_8^2\Phi_9\Phi_{10}^2\Phi_{12}\Phi_{14}\Phi_{15}\Phi_{20}$ & 2 & 2 &2 & 2 & 2 \\ \hline

$\frac{1}{6}q^{16}\Phi_1^8\Phi_2^6\Phi_4^4\Phi_5^2\Phi_7\Phi_8^2\Phi_9\Phi_{10}^2\Phi_{14}\Phi_{20}\Phi_{24}\Phi_{30}$ & 2 & 2 &2 & 2 & 2 \\ \hline

$\frac{1}{4}q^{16}\Phi_1^8\Phi_2^8\Phi_3^4\Phi_5^2\Phi_6^4\Phi_7\Phi_9\Phi_{10}^2\Phi_{14}\Phi_{15}\Phi_{18}\Phi_{30}$ & 2 & 2 &2 & 2 & 2 \\ \hline

$\frac{1}{3}q^7\Phi_1^6\Phi_2^7\Phi_3\Phi_4^4\Phi_5^2\Phi_6^2\Phi_7\Phi_8^2\Phi_{10}^2\Phi_{12}\Phi_{14}\Phi_{15}\Phi_{18}\Phi_{20}\Phi_{24}\Phi_{30}$ & 2 & 0 & 2 & 0 & 2 \\ \hline

$\frac{1}{2}q^{11}\Phi_1^7\Phi_3^4\Phi_4^4\Phi_5^2\Phi_6\Phi_7\Phi_8^2\Phi_9\Phi_{10}\Phi_{12}\Phi_{15}\Phi_{20}\Phi_{24}\Phi_{30}$ & 2 & 0 & 2 & 0 & 2 \\ \hline

$\frac{1}{3}q^{16}\Phi_1^6\Phi_2^6\Phi_3\Phi_4^4\Phi_5^2\Phi_6\Phi_7\Phi_8^2\Phi_{10}^2\Phi_{12}\Phi_{14}\Phi_{15}\Phi_{20}\Phi_{24}\Phi_{30}$ & 2 & 0 & 2 & 0 & 2 \\ \hline

$\frac{1}{3}q^8\Phi_1^6\Phi_2^7\Phi_3\Phi_4^4\Phi_5^2\Phi_6^2\Phi_7\Phi_8^2\Phi_{10}^2\Phi_{12}\Phi_{14}\Phi_{15}\Phi_{18}\Phi_{20}\Phi_{24}\Phi_{30}$ & 2 & 0 & 2 & 0 & 2 \\ \hline

$\frac{1}{2}q^{12}\Phi_1^7\Phi_3^4\Phi_4^4\Phi_5^2\Phi_6\Phi_7\Phi_8^2\Phi_9\Phi_{10}\Phi_{12}\Phi_{15}\Phi_{20}\Phi_{24}\Phi_{30}$ & 2 & 0 & 2 & 0 & 2 \\ \hline

$\frac{1}{3}q^{17}\Phi_1^6\Phi_2^6\Phi_3\Phi_4^4\Phi_5^2\Phi_6\Phi_7\Phi_8^2\Phi_{10}^2\Phi_{12}\Phi_{14}\Phi_{15}\Phi_{20}\Phi_{24}\Phi_{30}$ & 2 & 0 & 2 & 0 & 2 \\ \hline

$\frac{1}{3}q^7\Phi_1^6\Phi_2^8\Phi_3\Phi_4^4\Phi_5^2\Phi_6^2\Phi_7\Phi_8^2\Phi_{10}^2\Phi_{12}\Phi_{14}\Phi_{15}\Phi_{18}\Phi_{20}\Phi_{24}\Phi_{30}$ & $q-3$ & $q-2$ & $q-3$ & $q-2$ & $q-3$ \\ \hline

$\frac{1}{2}q^{11}\Phi_1^7\Phi_2\Phi_3^4\Phi_4^4\Phi_5^2\Phi_6\Phi_7\Phi_8^2\Phi_9\Phi_{10}\Phi_{12}\Phi_{15}\Phi_{20}\Phi_{24}\Phi_{30}$ &
$q-3$ & $q-2$ & $q-3$ & $q-2$ & $q-3$  \\ \hline

$\frac{1}{3}q^{16}\Phi_1^6\Phi_2^7\Phi_3\Phi_4^4\Phi_5^2\Phi_6\Phi_7\Phi_8^2\Phi_{10}^2\Phi_{12}\Phi_{14}\Phi_{15}\Phi_{20}\Phi_{24}\Phi_{30}$ &
$q-3$ & $q-2$ & $q-3$ & $q-2$ & $q-3$  \\ \hline

$\frac{1}{3}q^7\Phi_1^7\Phi_2^7\Phi_3\Phi_4^4\Phi_5^2\Phi_6^2\Phi_7\Phi_8^2\Phi_{10}^2\Phi_{12}\Phi_{14}\Phi_{15}\Phi_{18}\Phi_{20}\Phi_{24}\Phi_{30}$ &
$q-1$ & $q$ & $q-1$ & $q$ & $q-1$ \\ \hline

$\frac{1}{2}q^{11}\Phi_1^8\Phi_3^4\Phi_4^4\Phi_5^2\Phi_6\Phi_7\Phi_8^2\Phi_9\Phi_{10}\Phi_{12}^2\Phi_{15}\Phi_{20}\Phi_{24}\Phi_{30}$ &
$q-1$ & $q$ & $q-1$ & $q$ & $q-1$ \\ \hline

$\frac{1}{3}q^{16}\Phi_1^7\Phi_2^6\Phi_3\Phi_4^4\Phi_5^2\Phi_6\Phi_7\Phi_8^2\Phi_{10}^2\Phi_{12}\Phi_{14}\Phi_{15}\Phi_{20}\Phi_{24}\Phi_{30}$ &
$q-1$ & $q$ & $q-1$ & $q$ & $q-1$ \\ \hline

$\frac{1}{3}q^7\Phi_1^6\Phi_2^7\Phi_4^4\Phi_5^2\Phi_6^2\Phi_7\Phi_8^2 \Phi_{10}^2\Phi_{12}\Phi_{14}\Phi_{15}\Phi_{18}\Phi_{20}\Phi_{24}\Phi_{30}$ & 2 & 0 & 0 & 2 & 0 \\ \hline

$\frac{1}{3}q^8\Phi_1^6\Phi_2^8\Phi_4^4\Phi_5^2\Phi_6^2\Phi_7\Phi_8^2 \Phi_{10}^2\Phi_{12}\Phi_{14}\Phi_{15}\Phi_{18}\Phi_{20}\Phi_{24}\Phi_{30}$ & 2 & 0 & 0 & 2 & 0 \\ \hline

$\frac{1}{3}q^{10}\Phi_1^6\Phi_2^7\Phi_4^4\Phi_5^2\Phi_6^2\Phi_7\Phi_8^2 \Phi_{10}^2\Phi_{12}\Phi_{14}\Phi_{15}\Phi_{18}\Phi_{20}\Phi_{24}\Phi_{30}$ & 2 & 0 & 0 & 2 & 0 \\ \hline

$\frac{1}{3}q^7\Phi_1^7\Phi_2^6\Phi_3^2\Phi_4^4\Phi_5^2\Phi_7\Phi_8^2\Phi_9\Phi_{10}^2\Phi_{12}\Phi_{14}\Phi_{15}\Phi_{20}\Phi_{24}\Phi_{30}$ & 0 & 2 & 0 & 0 & 2 \\ \hline

$\frac{1}{3}q^8\Phi_1^8\Phi_2^6\Phi_3^2\Phi_4^4\Phi_5^2\Phi_7\Phi_8^2\Phi_9\Phi_{10}^2\Phi_{12}\Phi_{14}\Phi_{15}\Phi_{20}\Phi_{24}\Phi_{30}$ & 0 & 2 & 0 & 0 & 2  \\ \hline

$\frac{1}{3}q^{10}\Phi_1^7\Phi_2^6\Phi_3^2\Phi_4^4\Phi_5^2\Phi_7\Phi_8^2\Phi_9\Phi_{10}^2\Phi_{12}\Phi_{14}\Phi_{15}\Phi_{20}\Phi_{24}\Phi_{30}$ & 0 & 2 & 0 & 0 & 2  \\ \hline

$\frac{1}{3}q^7\Phi_1^6\Phi_2^7\Phi_3\Phi_4^4\Phi_5^2\Phi_6^2\Phi_7\Phi_8^2\Phi_{10}^2\Phi_{12}\Phi_{14}\Phi_{15}\Phi_{18}\Phi_{20}\Phi_{24}\Phi_{30}$ & $q-5$ & $q-2$ & $q-3$ & $q-4$ & $q-3$ \\ \hline

$\frac{1}{3}q^8\Phi_1^6\Phi_2^7\Phi_3\Phi_4^4\Phi_5^2\Phi_6^2\Phi_7\Phi_8^2\Phi_{10}^2\Phi_{12}\Phi_{14}\Phi_{15}\Phi_{18}\Phi_{20}\Phi_{24}\Phi_{30}$ & $q-5$ & $q-2$ & $q-3$ & $q-4$ & $q-3$\\ \hline

$\frac{1}{3}q^7\Phi_1^7\Phi_2^6\Phi_3^2\Phi_4^4\Phi_5^2\Phi_6\Phi_7\Phi_8^2\Phi_9\Phi_{10}^2\Phi_{12}\Phi_{14}\Phi_{15}\Phi_{20}\Phi_{24}\Phi_{30}$ & $q-1$ & $q-2$ & $q-1$ & $q$ & $q-3$\\ \hline

$\frac{1}{3}q^8\Phi_1^7\Phi_2^6\Phi_3^2\Phi_4^4\Phi_5^2\Phi_6\Phi_7\Phi_8^2\Phi_9\Phi_{10}^2\Phi_{12}\Phi_{14}\Phi_{15}\Phi_{20}\Phi_{24}\Phi_{30}$ & $q-1$ & $q-2$ & $q-1$ & $q$ & $q-3$ \\ \hline

$\frac{1}{3}q^7\Phi_1^6\Phi_2^8\Phi_3\Phi_4^4\Phi_5^2\Phi_6^2\Phi_7\Phi_8^2\Phi_{10}^2\Phi_{12}\Phi_{14}\Phi_{15}\Phi_{18}\Phi_{20}\Phi_{24}\Phi_{30}$ & $\frac{1}{6}(q^2-8q+19)$ & $\frac{1}{6}(q-2)(q-6)$ & $\frac{1}{6}(q-3)(q-5)$ & $\frac{1}{6}(q-4)^2$ & $\frac{1}{6}(q-3)(q-5)$ \\ \hline

$\frac{1}{3}q^7\Phi_1^7\Phi_2^7\Phi_3\Phi_4^4\Phi_5^2\Phi_6^2\Phi_7\Phi_8^2\Phi_{10}^2\Phi_{12}\Phi_{14}\Phi_{15}\Phi_{18}\Phi_{20}\Phi_{24}\Phi_{30}$ & $\frac{1}{2}(q-1)^2$ & $\frac{1}{2}q(q-2)$ & $\frac{1}{2}(q-1)^2$ & $\frac{1}{2}q(q-2)$ & $\frac{1}{2}(q-1)^2$ \\ \hline

$\frac{1}{3}q^7\Phi_1^7\Phi_2^7\Phi_3^2\Phi_4^4\Phi_5^2\Phi_6\Phi_7\Phi_8^2\Phi_9\Phi_{10}^2\Phi_{12}\Phi_{14}\Phi_{15}\Phi_{20}\Phi_{24}\Phi_{30}$ & $\frac{1}{2}(q-1)^2$ & $\frac{1}{2}q(q-2)$ & $\frac{1}{2}(q-1)^2$ & $\frac{1}{2}q(q-2)$ & $\frac{1}{2}(q-1)^2$ \\ \hline

$\frac{1}{3}q^7\Phi_1^8\Phi_2^8\Phi_3^2\Phi_4^4\Phi_5^2\Phi_7\Phi_8^2\Phi_9\Phi_{10}^2\Phi_{12}\Phi_{14}\Phi_{15}\Phi_{20}\Phi_{24}\Phi_{30}$ & $\frac{1}{3}q(q-1)$ & $\frac{1}{3}(q+1)(q-2)$ & $\frac{1}{3}q(q-1)$ & $\frac{1}{3}q(q-1)$ & $\frac{1}{3}(q+1)(q-2)$ \\ \hline

$\frac{1}{3}q^7\Phi_1^8\Phi_2^8\Phi_4^4\Phi_5^2\Phi_6^2\Phi_7\Phi_8^2\Phi_{10}^2\Phi_{12}\Phi_{14}\Phi_{15}\Phi_{18}\Phi_{20}\Phi_{24}\Phi_{30}$ & $\frac{1}{3}(q-1)(q+2)$ & $\frac{1}{3}q(q+1)$ & $\frac{1}{3}q(q+1)$ & $\frac{1}{3}(q-1)(q+2)$ & $\frac{1}{3}q(q+1)$\\ \hline

$\frac{1}{3}q^7\Phi_1^8\Phi_2^6\Phi_3^2\Phi_4^4\Phi_5^2\Phi_6\Phi_7\Phi_8^2\Phi_9\Phi_{10}^2\Phi_{12}\Phi_{14}\Phi_{15}\Phi_{20}\Phi_{24}\Phi_{30}$ & $\frac{1}{6}(q-1)(q-3)$ & $\frac{1}{6}(q-2)^2$ & $\frac{1}{6}(q-1)(q-3)$ & $\frac{1}{6}q(q-4)$ & $\frac{1}{6}(q^2-4q+7)$ \\ \hline
\end{tabular}
}
\end{adjustbox}
\end{table}
\end{center}

\newpage

\bigskip

\noindent
\begin{tabular}{ll}
\textsc{Department of Mathematics}\\ 
\textsc{College of William and Mary}\\
\textsc{P. O. Box 8795} \\
\textsc{Williamsburg, VA  23187}\\
{\em e-mail}: {\tt sjtrefethen@wm.edu},   {\tt vinroot@math.wm.edu}\\
\end{tabular}

\begin{thebibliography}{10}

\bibitem{AsSe76}
M. Aschbacher and G. M. Seitz, Involutions in Chevalley groups over fields of even order, \emph{Nagoya Math. J.} \textbf{63} (1976), 1--91.

\bibitem{Ba88}
M. J. J. Barry, Schur indices and ${^3 D_4(q^3)}$, $q$ odd, \emph{J. Algebra} \textbf{117} (1988), no. 2, 522--524.

\bibitem{BuTh18}
T. C. Burness and A. R. Thomas, On the involution fixity of exceptional groups of Lie type, \emph{Internat. J. Algebra Comput.} \textbf{28} (2018), no. 3, 411--466.

\bibitem{Ca85}
R. W. Carter, Finite groups of Lie type, Conjugacy classes and complex characters, Pure and Applied Mathematics (New York), John Wiley \& Sons, Inc., New York, 1985.

\bibitem{Ch68}
B. Chang, The conjugate classes of Chevalley groups of type $G_2$, \emph{J. Algebra} \textbf{9} (1968), 190--211.

\bibitem{ChRe74}
B. Chang and R. Ree, The characters of $G_2(q)$, In: \emph{Symposia Mathematica, Vol. XIII (Congvegno di Gruppi e loro Rappresentazioni, INDAM, Rome, 1972)}, pp. 395--413, Academic Press, London, 1974.

\bibitem{Atlas}
J. H. Conway, R. T. Curtis, S. P. Norton, R. A. Parker, and R. A. Wilson, Atlas of finite groups, Maximal subgroups and ordinary characters for simple groups, With computational assistance from J. G. Thackray, Oxford University Press, Eynsham, 1985.

\bibitem{De83}
D. I. Deriziotis, The centralizers of semisimple elements of the Chevalley groups $E_7$ and $E_8$, \emph{Tokyo J. Math.} \textbf{6} (1983), no. 1, 191--216.

\bibitem{DeMi87}
D. I. Deriziotis and G. O. Michler, Character table and blocks of finite simple triality groups ${^3 D_4(q)}$, \emph{Trans. Amer. Math. Soc.} \textbf{303} (1987), no. 1, 39--70.

\bibitem{dmbook}
F. Digne and J. Michel, Representations of finite groups of Lie type, London Mathematical Society Student Texts, 21, Cambridge University Press, Cambridge, 1991.

\bibitem{En69}
H. Enomoto, The conjugacy classes of Chevalley groups of type $(G_2)$ over finite fields of characteristic 2 or 3, \emph{J. Fac. Sci. Univ. Tokyo Sect. I} \textbf{16} (1969), 497--512.

\bibitem{En76}
H. Enomoto, The characters of the finite Chevalley group $G_2(q)$, $q = 3^f$, \emph{Japan J. Math. (N.S.)} \textbf{2} (1976), no. 2, 191--248.

\bibitem{EnOh93}
H. Enomoto and Z. Ohmori, The Schur indices of the irreducible characters of $G_2(2^n)$, \emph{Proc. Japan Acad. Ser. A Math. Sci.} \textbf{69} (1993), no. 9, 350--352.

\bibitem{EnYa86}
H. Enomoto and H. Yamada, The characters of $G_2(2^n)$, \emph{Japan J. Math. (N.S.)} \textbf{12} (1986), no. 2, 325--377.

\bibitem{FlJa93}
P. Fleischmann and I. Janiszczak, The semisimple conjugacy classes of finite groups of Lie type $E_6$ and $E_7$, \emph{Comm. Algebra} \textbf{21} (1993), no. 1, 93--161.

\bibitem{FlJa94}
P. Fleischmann and I. Janiszczak, The semisimple conjugacy classes and the generic class number of the finite simple groups of Lie type $E_8$, \emph{Comm. Algebra} \textbf{22} (1994), no. 6, 2221--2203.

\bibitem{GAP}
The {\sf GAP} Group, {\sf GAP} -- Groups, Algorithms, and Programming, Version 4.10.1; 2019, {\tt https://www.gap-system.org}.

\bibitem{Ge03}
M. Geck, Character values, Schur indices and character sheaves, \emph{Represent. Theory} \textbf{7} (2003), 19--55.

\bibitem{Ge04}
M. Geck, On the Schur indices of cuspidal unipotent characters, \emph{Finite groups 2003}, 87--104, \emph{Walter de Gruyter, Berlin,} 2004.  

\bibitem{Ge05}
M. Geck, The Schur indices of the cuspidal unipotent characters of the finite Chevalley groups $E_7(q)$, \emph{Osaka J. Math.} \textbf{42} (2005), no. 1, 201--215.

\bibitem{CHEVIE}
M. Geck, G. Hiss, F. L\"ubeck, G. Malle and G. Pfeiffer, {\sf CHEVIE} -- A system for computing and processing generic character tables for finite groups of Lie type, Weyl groups and Hecke algebras, \emph{Appl. Algebra Engrg. Comm. Comput.} \textbf{7} (1996), 175--210.

\bibitem{GeMa18}
M. Geck and G. Malle, Reductive groups and Steinberg maps, {\tt arXiv:1608.10056}.

\bibitem{Go761}
R. Gow, Real-valued characters and the Schur index, \emph{J. Algebra} \textbf{40} (1976), no. 1, 258--270.

\bibitem{Go762}
R. Gow, Schur indices of some groups of Lie type, \emph{J. Algebra} \textbf{42} (1976), no. 1, 102--120.

\bibitem{HiHu12}
F. Himstedt and S. Huang, Dade's invariant conjecture for the Ree groups ${^2 F_4(q^2)}$ in defining characteristic, \emph{Comm. Algebra} \textbf{40} (2012), no. 2, 452--496.

\bibitem{IsBook}
I. M. Isaacs, Character theory of finite groups, Pure and Applied Mathematics, No. 69, Academic Press [Harcourt Brace Jovanovich, Publishers], New York-London, 1976.

\bibitem{Iw70}
N. Iwahori, Centralizers of involutions in finite Chevalley groups, In: \emph{Seminar on Algebraic Groups and Related Finite Groups (The Institute for Advanced Study, Princeton, N. J., 1968/69)}, pp. 267--295, Lecture Notes in Mathematics, Vol. 131, Springer, Berlin, 1970.

\bibitem{LuWWW1}
F. L\"{u}beck, Centralizers and numbers of semisimple classes in exceptional groups of Lie type, {\tt http://www.math.rwth-aachen.de/$\sim$Frank.Luebeck/chev/CentSSClasses/index.html}

\bibitem{LuWWW}
F. L\"{u}beck, Character degrees and their multiplicities for some groups of Lie type of rank $<$ 9, {\tt http://www.math.rwth-aachen.du/$\sim$Frank.Luebeck/chev/DegMult/index.html}

\bibitem{Lu78} 
G.~Lusztig, Representations of finite Chevalley groups, Expository lectures from the CBMS Regional Conferences held at Madison, Wis., August 8-12, 1977, CBMS Conference Series in Mathematics, 39, \emph{American Mathematical Society, Providence, R.I.}, 1978.

\bibitem{Lu84}
G. Lusztig, Characters of reductive groups over a finite field, Annals of Mathematics Studies, 107, Princeton University Press, Princeton, NJ, 1984.

\bibitem{LuPa10}
K. Lux and H. Pahlings, Representations of groups, A computational approach, Cambridge Studies in Advanced Mathematics, 124, Cambridge University Press, Cambridge, 2010.

\bibitem{Ma90}
G.~Malle, Die unipotenten Charaktere von ${^2 F_4}(q^2)$, \emph{Comm. Algebra} \textbf{18} (1990), no. 7, 2361--2381.

\bibitem{Ma07}
G.~Malle, Height $0$ characters of finite groups of Lie type, \emph{Represent. Theory} \textbf{11} (2007), 192--220.

\bibitem{Ma08}
G.~Malle, Extensions of unipotent characters and the inductive McKay condition, \emph{J. Algebra} \textbf{320} (2008), 2963--2980.

\bibitem{Oh85}
Z. Ohmori, Schur indices of some finite Chevalley groups of rank 2, I, \emph{Tokyo J. Math.} \textbf{8} (1985), no. 1, 133--150.

\bibitem{Oh93}
Z. Ohmori, On the existence of characters of the Schur index 2 of the simple finite Steinberg groups of type $({^2 E_6})$, \emph{Proc. Japan Acad. Ser. A Math. Sci.} \textbf{69} (1993), no. 8, 296--298

\bibitem{Oh03}
J.~Ohmori, On the Schur indices of characters of finite reductive groups in bad characteristic cases, \emph{Osaka J. Math.} \textbf{40} (2003), no. 4, 1011--1019.

\bibitem{Re61}
R. Ree, A family of simple groups associated with the simple Lie algebra of type $(G_2)$, \emph{Amer. J. Math.} \textbf{83} (1961), 432--462.

\bibitem{Sh74}
K.-I. Shinoda, The conjugacy classes of Chevalley groups of type $(F_4)$ over finite fields of characteristic 2, \emph{J. Fac. Sci. Univ. Tokyo Sect. IA Math.} \textbf{21} (1974), 133--159.

\bibitem{Sho74}
T. Shoji, The conjugacy classes of Chevalley groups of type $(F_4)$ over finite fields of characteristic $p \neq 2$, \emph{J. Fac. Sci. Univ. Tokyo Sect. IA Math.} \textbf{21} (1974), 1--17.

\bibitem{SiTh08}
A. Singh and M. Thakur, Reality properties of conjugacy classes in algebraic groups, \emph{Israel J. Math.} \textbf{165} (2008), 1--27.

\bibitem{SV15}
B.~Srinivasan and C. R.~Vinroot, Jordan decomposition and real-valued characters of finite reductive groups with connected center, \emph{Bull. Lond. Math. Soc.} \textbf{47} (2015), no. 3, 418--426.

\bibitem{Su60}
M. Suzuki, A new type of simple groups of finite order, \emph{Proc. Nat. Acad. Sci. U.S.A.} \textbf{46} (1960), 868--870.

\bibitem{Su62}
M. Suzuki, On a class of doubly transitive groups, \emph{Ann. of Math. (2)} \textbf{75} (1962), 105--145.

\bibitem{JT12}
J. Taylor, On unipotent supports of reductive groups with a disconnected centre, University of Aberdeen, PhD Thesis, 2012.

\bibitem{Th70}
G. Thomas, A characterization of the Steinberg groups ${D^2_4(q^3)}$, $q = 2^n$, \emph{J. Algebra} \textbf{14} (1970), 373--385.

\bibitem{Tu02}
A. Turull, Schur indices of perfect groups, \emph{Proc. Amer. Math. Soc.} \textbf{130} (2002), no. 2, 367--370.


\bibitem{VdGa10}
E. P. Vdovin and A. A. Gal$^\prime$t, Strong reality of finite simple groups, \emph{Sib. Math. J.} \textbf{51} (2010), no. 4, 610--615.

\bibitem{Wa66}
H. N. Ward, On Ree's series of simple groups, \emph{Trans. Amer. Math. Soc.} \textbf{121} (1966), 62--89.




\end{thebibliography}
\end{document}